\newtheorem{theorem}{Theorem}[section]
\newtheorem{proposition}[theorem]{Proposition}
\newtheorem{lemma}[theorem]{Lemma}
\newtheorem{corollary}[theorem]{Corollary}
\newtheorem{remark}[theorem]{Remark}
\newcommand{\sezione}[1]{\section{#1}\setcounter{equation}{0}}
\def\R{\mathbb{R}}
\def\di12{\mathcal{D}^{1,2}(\R^n)}
\def\l{{\lambda}}
\def\0l{_{0,\l}}
\def\1l{_{1,\l}}
\def\2l{_{2,\l}}
\def\3l{_{3,\l}}
\def\4l{_{4,\l}}
\def\Om{\Omega}
\def\sideremark#1{\ifvmode\leavevmode\fi\vadjust{\vbox to0pt{\vss
 \hbox to 0pt{\hskip\hsize\hskip1em
 \vbox{\hsize2.1cm\tiny\raggedright\pretolerance10000
  \noindent #1\hfill}\hss}\vbox to15pt{\vfil}\vss}}}%
\newcommand{\F}{\color{red}}
\newtheorem*{theorem*}{Theorem}
\renewcommand{\phi}{\varphi}
\renewcommand{\Om}{\Omega}
\newcommand{\tangvet}{\textsl{t}}
\newcommand{\orig}{\mathbf{0}}
\newcommand{\curv}{\mathfrak{K}}
\newcommand{\eps}{\varepsilon}
\DeclarePairedDelimiter{\abs}{\lvert}{\rvert}
\DeclarePairedDelimiter{\scal}{\langle}{\rangle}
\DeclareMathOperator{\hess}{Hess}
\begin{document}
\title[critical points]{Uniqueness of the critical point for semi-stable solution in $\R^2$}
\thanks{This work was supported by INDAM-GNAMPA}
\thanks{D. Mukherjee's research is supported by the Czech Science Foundation, project GJ19--14413Y. This work was started while D. Mukherjee was visiting Mathematics Department of the University of Rome "La Sapienza" supported by INDAM-GNAMPA }

\author[De Regibus]{Fabio De Regibus}
\address{Dipartimento di Matematica, Universit\`a di Roma ``La Sapienza", P.le A. Moro 2 - 00185 Roma, Italy, e-mail: {\sf fabio.deregibus@uniroma1.it}.}
\author[Grossi]{Massimo Grossi }
\address{Dipartimento di Matematica, Universit\`a di Roma ``La Sapienza", P.le A. Moro 2 - 00185 Roma, Italy, e-mail: {\sf massimo.grossi@uniroma1.it}.}
\author[Mukherjee]{Debangana Mukherjee}
\address{Department of Mathematics and Statistics, Masaryk University, Kotlářská 267/2, 611 37 Brno, Czech Republic, e-mail: {\sf mukherjeed@math.muni.cz}.}

\maketitle
\begin{abstract}
In this paper we show the uniqueness of the critical point for \emph{semi-stable} solutions of the problem
\begin{equation*}
\begin{cases}
-\Delta u=f(u)&\text{in }\Omega\\
u>0&\text{in }\Omega\\
u=0&\text{on }\partial\Omega,
\end{cases}
\end{equation*}
where $\Om\subset\R^2$ is a smooth bounded domain whose boundary has \emph{nonnegative} curvature and $f(0)\ge0$. It extends a result by Cabr\'e-Chanillo to the case where the curvature of $\partial\Om$ vanishes.
\end{abstract}

\sezione{Introduction and statement of the main results}\label{s0}
In this paper we study the number of critical points of solutions of the problem
\begin{equation}\label{pb1}
\begin{cases}
-\Delta u=f(u)&\text{ in }\ \Om\\
u>0&\text{ in }\ \Om\\
u=0&\text{ on }\ \partial\Om
\end{cases}
\end{equation}
where $\Om\subset\R^2$ is a smooth bounded domain and $f$ is a smooth nonlinearity.\\
This is a classical problem of partial differential equations and many mathematicians gave important contributions. Since it is impossible to give an exhaustive list of references  we will limit ourselves to recall some results that are closer to the interest of this paper.\\
One of the first results concerns $f(u)=\l u$, hence $u$ is the first eigenfunction of the Laplacian with zero Dirichlet boundary condition. Here it was proved by Brascamp and Lieb~\cite{bl} and Acker, Payne and Philippin~\cite{app} in dimension $n=2$ that if $\Om\subset\R^n$ is strictly convex then $u$ admits a unique critical point in $\Omega$.\\
 A second seminal result is the fundamental theorem by Gidas, Ni and Nirenberg~\cite{gnn}.
\begin{theorem*}[\bf Gidas, Ni, Nirenberg]
 Let $\Om\subset\R^n$ be a  smooth bounded domain which is symmetric with respect to the plane $x_i=0$ for any $i=1,\dots,n$ and  convex with respect to any direction $x_1,\dots,x_n$. Suppose that $u$ is a positive
 solution to~\eqref{pb1}
where $f$ is a locally Lipschitz nonlinearity. Then 
\begin{itemize}
\item $u$ is symmetric with respect to $x_1,\dots,x_n$. (Symmetry)
\item  $\frac {\partial u}{\partial x_i}<0$ for $x_i>0$ and $i=1,\dots,n$. (Monotonicity)
\end{itemize}
\end{theorem*}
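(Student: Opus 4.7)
The plan is to use the \emph{moving plane method} in each coordinate direction; by the symmetry of the hypotheses it suffices to work in the $x_1$-direction. Set $a=\max_{x\in\overline\Om}x_1$ and, for $0<\l<a$, let $\Sigma_\l=\Om\cap\{x_1>\l\}$, denote by $x^\l=(2\l-x_1,x_2,\dots,x_n)$ the reflection of $x$ across the hyperplane $T_\l=\{x_1=\l\}$, and define
\[
w_\l(x)=u(x^\l)-u(x),\qquad x\in\overline{\Sigma_\l}.
\]
Convexity of $\Om$ in the $x_1$-direction ensures that the reflected cap $\Sigma_\l^*=\{x^\l:x\in\Sigma_\l\}$ lies in $\Om$, so $w_\l$ is well defined. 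By the mean value theorem,
\[
-\Delta w_\l=f(u(x^\l))-f(u(x))=c_\l(x)\,w_\l\quad\text{in }\Sigma_\l,
\]
with $c_\l\in L^\infty(\Sigma_\l)$, since $f$ is locally Lipschitz and $u$ is bounded. Furthermore $w_\l\equiv 0$ on $T_\l\cap\overline{\Sigma_\l}$, while on $\de\Sigma_\l\cap\de\Om$ one has $u(x)=0\le u(x^\l)$, so $w_\l\ge 0$ on the full boundary $\de\Sigma_\l$.

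First I would check that for $\l$ close enough to $a$ the cap $\Sigma_\l$ is a thin sliver, and the Hopf boundary point lemma applied at a point realising $\max x_1$ on $\de\Om$ yields $\de_{x_1}u<0$ in a one-sided neighbourhood; integrating along the reflection gives $w_\l>0$ in $\Sigma_\l$. Next, set
\[
\l_0=\inf\{\l\in(0,a)\,:\,w_\mu>0\text{ in }\Sigma_\mu\text{ for every }\mu\in(\l,a)\}.
\]
The crux of the argument is to show $\l_0=0$. If, instead, $\l_0>0$, then by continuity $w_{\l_0}\ge 0$ in $\Sigma_{\l_0}$; since $w_{\l_0}$ solves a linear elliptic equation with bounded zeroth-order coefficient and is not identically zero (boundary points of $\Sigma_{\l_0}$ on $\de\Om$ whose reflection lies in the interior of $\Om$ force $w_{\l_0}>0$ there), the strong maximum principle gives $w_{\l_0}>0$ in $\Sigma_{\l_0}$. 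Combining the Hopf lemma on $T_{\l_0}\cap\Om$, which produces a strict interior transversal derivative, with a uniform lower bound on a compactly contained subcap of $\Sigma_{\l_0}$ propagates the strict positivity to $\Sigma_{\l_0-\e}$ for small $\e>0$, contradicting the definition of $\l_0$.

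Once $\l_0=0$, letting $\l\to 0^+$ in $w_\l\ge 0$ yields $u(-x_1,x_2,\dots,x_n)\ge u(x_1,x_2,\dots,x_n)$ for $x_1>0$; running the same moving plane argument from the opposite side (or invoking the assumed symmetry of $\Om$) gives the reverse inequality, hence symmetry across $\{x_1=0\}$. Monotonicity $\de_{x_1}u<0$ for $x_1>0$ then follows from the strong maximum principle applied to $\de_{x_1}u$, which solves the linearised equation $-\Delta(\de_{x_1}u)=f'(u)\,\de_{x_1}u$. The same reasoning applied in each coordinate direction completes the proof. I expect the main obstacle to be the second step above: at $\l=\l_0$ the reflected cap may become tangent to $\de\Om$, and one must carefully exclude this scenario using the convexity of $\Om$ in $x_1$ and treat the corner points where $T_{\l_0}$ meets $\de\Om$ via a local Serrin-type argument (either a second-order Hopf lemma or Serrin's corner lemma) to justify propagation past $\l_0$.
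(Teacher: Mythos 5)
This statement is the classical Gidas--Ni--Nirenberg theorem, which the paper only quotes as background (with a citation to the original work) and never proves, so there is no in-paper argument to compare with; your sketch is the standard moving-plane method, which is exactly how the cited result is proved, and its overall architecture (thin slivers to start, the critical value $\lambda_0$, strong maximum principle plus Hopf/Serrin to push past $\lambda_0$, symmetry from both sides) is correct.

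Two steps, however, are genuinely flawed as written. First, your final monotonicity argument applies the strong maximum principle to $\partial_{x_1}u$ as a solution of $-\Delta(\partial_{x_1}u)=f'(u)\,\partial_{x_1}u$: this is not available, since $f$ is only assumed locally Lipschitz (so $f'(u)$ need not exist), and even for smooth $f$ the strong maximum principle applied to a sign-changing solution of a linear equation with a zero-order term of arbitrary sign gives no sign information. The correct route is already contained in your moving-plane output: for every $\lambda\in(0,a)$ you have $w_\lambda>0$ in $\Sigma_\lambda$, $w_\lambda=0$ on $T_\lambda\cap\Omega$, and $-\Delta w_\lambda=c_\lambda(x)w_\lambda$ with $c_\lambda$ bounded; Hopf's lemma at points of $T_\lambda\cap\Omega$ (where $\partial_{x_1}w_\lambda=-2\,\partial_{x_1}u$) yields $\partial_{x_1}u(\lambda,x')<0$, and letting $\lambda$ range over $(0,a)$ gives the monotonicity. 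Second, your starting step invokes Hopf's lemma at the point realizing $\max x_1$ to get $\partial_{x_1}u<0$ nearby; this works when $f(0)\ge0$ (write $f(u)=c(x)u+f(0)$ with $c$ bounded and use $u\ge0$), but the theorem allows any locally Lipschitz $f$, and when $f(0)<0$ the normal derivative of $u$ may vanish on $\partial\Omega$, so the naive Hopf argument fails. This case requires either the finer two-case analysis of the original Gidas--Ni--Nirenberg paper or, more efficiently, the Berestycki--Nirenberg maximum principle in domains of small measure, which both starts the plane and replaces the delicate propagation past $\lambda_0$ (including the tangency and corner issues you flag, and the verification that $w_{\lambda_0}\not\equiv0$ when part of $\partial\Omega$ is flat in the $x_1$-direction) by a uniform, measure-based argument.
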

\noindent
Some conjectures claim that the uniqueness of the critical point holds in more general convex domains. A complete result for general nonlinearities is not known, there are some contributions for suitable cases (see~\cite{gm} and~\cite{gg1,gg2}).\\
Next we mention another important result which avoids the symmetry assumption on $\Om$ but assume that $u$ is  a semi-stable solution. We recall that $u$ is a \emph{semi-stable} solution of the problem~\eqref{pb1} if the linearized operator at $u$ is nonnegative definite, i.e. if for all $\phi\in\mathcal C^{\infty}_{0}(\Omega)$ one has
\[
\int_{\Omega}|\nabla\phi|^{2}-\int_{\Omega} f'(u)|\phi|^{2}\ge0,
\]
or equivalently if the first eigenvalue of the linearized operator $-\Delta-f'(u)$ in $\Omega$ is nonnegative.
\begin{theorem}[\bf Cabr\'e, Chanillo \cite{cc}]\label{i1}
 Assume $\Omega$ is a smooth, bounded and convex domain of $\R^2$ whose boundary has positive curvature. Suppose $f\ge0$ and $u$ is a semi-stable positive solution to~\eqref{pb1}. 
Then $u$ has an unique nondegenerate critical point.
\end{theorem}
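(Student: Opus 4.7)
The plan is to exploit the fact that every directional derivative $v := u_\xi = \nabla u \cdot \xi$, for a unit vector $\xi \in S^1$, satisfies the linearized equation
\[
-\Delta v = f'(u)\, v \quad \text{in } \Om,
\]
and then to use semi-stability to severely restrict the nodal structure of $v$. Since every critical point of $u$ is a zero of $v$ for every direction $\xi$, restricting this structure will pin down the critical set.

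The first preliminary step is to show that $u$ has no critical points on $\de\Om$. Because $f(0)\ge 0$ and $u>0$ in $\Om$ with $u=0$ on $\de\Om$, Hopf's lemma gives $\de_\nu u<0$ on $\de\Om$, so $\na u\ne 0$ there. Next, fix $\xi\in S^1$ and set $v=u_\xi$. On $\de\Om$ we have $\na u=-|\na u|\,\nu$, hence $v=-|\na u|\,(\xi\cdot\nu)$. Because $\de\Om$ has positive curvature the Gauss map $\nu:\de\Om\to S^1$ is a diffeomorphism, so $\xi\cdot\nu$ vanishes at exactly two points $a_\xi,b_\xi\in\de\Om$. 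These split $\de\Om$ into two arcs $\Gamma^+_\xi$ (where $v>0$) and $\Gamma^-_\xi$ (where $v<0$).

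The core of the argument is the following structural lemma: \emph{the nodal set $\{v=0\}\cap\Om$ is a single simple arc joining $a_\xi$ to $b_\xi$, and $\{v\ne 0\}$ has exactly two connected components.} Suppose, for contradiction, a third nodal component $\omega$ of $\{v\neq 0\}$ exists. By the boundary sign pattern above, such $\omega$ can meet $\de\Om$ only at the isolated points $\{a_\xi,b_\xi\}$, so $v\in H^1_0(\omega)$ is a sign-definite solution of $Lv=0$ in $\omega$, where $L:=-\Delta-f'(u)$. Hence $\mu_1(L;\omega)=0$. But strict monotonicity of the Dirichlet first eigenvalue under the strict inclusion $\omega\subsetneq\Om$, combined with semi-stability, gives $\mu_1(L;\omega)>\mu_1(L;\Om)\ge 0$, a contradiction.

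Finally, uniqueness of the critical point is deduced by a line argument. If there were two distinct critical points $x_0\ne x_1$, set $\xi:=(x_1-x_0)/|x_1-x_0|$, and consider $v=u_\xi$ on the segment $\ell=\overline\Om\cap\{x_0+t\xi\}$. Then $v$ is positive at the entry point, negative at the exit point (by the boundary formula), and vanishes at $x_0$ and $x_1$. Since $x_0,x_1$ are isolated critical points, a Taylor expansion $v(x_i+t\xi)=t\,u_{\xi\xi}(x_i)+O(t^2)$ determines (generically) the sign change of $v$ at each $x_i$; matching these signs with the values at the boundary endpoints of $\ell$ forces at least one additional zero of $v$ on $\ell\setminus\{x_0,x_1\}$. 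This extra zero generates a connected component of $\{v=0\}\cap\Om$ distinct from a single arc from $a_\xi$ to $b_\xi$, contradicting the structural lemma. Nondegeneracy of the (unique) critical point then follows: a degenerate Hessian would produce a branching of the nodal set of $u_\xi$ for some $\xi$, again violating the structural lemma. The main obstacle I anticipate is the structural lemma: controlling how $\{v=0\}$ can touch $\de\Om$ (which is where positive curvature is crucial), and handling the exceptional directions $\xi$ for which $u_{\xi\xi}(x_0)=0$ in the line argument. This forces one either to perturb $\xi$ or to argue that the degenerate configuration itself produces a nodal branching that contradicts the structural lemma.
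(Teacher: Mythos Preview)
Your structural lemma is correct and is essentially Proposition~\ref{sez2:prop1} in the paper (going back to Cabr\'e--Chanillo): for each direction $\theta$, the nodal set $N_\theta=\{u_\theta=0\}$ is a single smooth arc joining the two boundary points $a_\theta,b_\theta$, the set $M_\theta$ is empty, and consequently every critical point of $u$ has nondegenerate Hessian. Up to here your outline and the paper agree.

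The gap is in your ``line argument''. Suppose $x_0\ne x_1$ are two critical points and $\xi$ is the direction of the chord $\ell$ through them. You correctly observe that $v=u_\xi$ is positive at the entry endpoint of $\ell$, negative at the exit, and vanishes at $x_0,x_1$; if $u_{\xi\xi}(x_i)\ne0$ this forces a third zero $z\in\ell$. But your conclusion that this ``generates a connected component of $\{v=0\}$ distinct from the single arc'' is unjustified: $z$ is simply another point of the \emph{same} arc $N_\theta$, and a straight chord can cross a single simple arc any finite number of times. The structural lemma says $N_\theta$ is one arc, not that it is met by each chord at most once, so no contradiction arises. (The side issue you flag, that $u_{\xi\xi}(x_i)$ might vanish, is real too: nondegeneracy of $\hess_u(x_i)$ gives $\hess_u(x_i)\xi\ne 0$, not $\xi^{\!\top}\hess_u(x_i)\xi\ne 0$; at a saddle the quadratic form vanishes on two directions.)

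What the structural lemma by itself cannot do is count: the degree of $\nabla u$ on $\Omega$ equals $1$, but a nondegenerate maximum contributes $+1$ while a saddle contributes $-1$, so $\#\{\text{maxima}\}-\#\{\text{saddles}\}=1$ is all you get. The paper closes this gap with a different vector field,
\[
T=(u_{yy}u_x-u_{xy}u_y,\ u_{xx}u_y-u_{xy}u_x),
\]
for which (i) $\deg(T,\Omega,\orig)=1$ by an elementary homotopy using positive boundary curvature, (ii) the zeros of $T$ in $\Omega$ are exactly the critical points of $u$ once $M_\theta=\emptyset$ is known, and (iii) crucially $\det\mathrm{Jac}_T(q)=(\det\hess_u(q))^2>0$ at each critical point, so \emph{every} zero has index $+1$ regardless of whether it is a maximum or a saddle. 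Summing indices then gives $\#\{\text{critical points}\}=1$ directly. This is the missing counting device in your proposal.
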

In this paper we want to extend Cabr\'e-Chanillo's result allowing the curvature of $\partial\Om$ to vanish somewhere. Our main result is the following,
\begin{theorem}\label{mainthm}
Let $\Omega\subset\R^{2}$ be a smooth bounded convex domain whose boundary has nonnegative curvature and such that the subset of zero-curvature consists of isolated points or segments. Moreover we require that $f(0)\ge0$.\\
 If $u$ is a semistable solution to~\eqref{pb1} then $u$ has an unique critical point $x_0$. Moreover $x_0$ is a nondegenerate maximum point of $u$.
\end{theorem}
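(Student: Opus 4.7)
The plan is to adapt the nodal analysis of Cabr\'e--Chanillo (Theorem~\ref{i1}) and to supplement it with a boundary argument that handles the zero-curvature set using the hypothesis $f(0)\ge0$. The spectral input from semi-stability is that the linearized operator $L:=-\Delta-f'(u)$ satisfies $\lambda_{1}(L,\Omega)\ge 0$, and hence any sign-changing solution of $Lv=0$ in $\Omega$ cannot have too many nodal components whose closures are subdomains of $\Omega$, by domain monotonicity of $\lambda_{1}$.

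Suppose for contradiction that $u$ has two distinct critical points $P_{1}\neq P_{2}$. Let $e$ be the unit vector along $P_{2}-P_{1}$ and set $v:=e\cdot\nabla u$. Then $Lv=0$ in $\Omega$ and $v(P_{1})=v(P_{2})=0$. On $\partial\Omega$ we have $\nabla u=u_{\nu}\,\nu$ (because $u=0$ there) and $u_{\nu}<0$ (Hopf), so $v=(e\cdot\nu)\,u_{\nu}$ has sign opposite to $e\cdot\nu$. As $p$ traverses $\partial\Omega$ once, $\nu(p)$ sweeps $S^{1}$ with total rotation $2\pi$; in the strictly convex case $\{e\cdot\nu=0\}\cap\partial\Omega$ is exactly two transversal points and $v$ has exactly two boundary sign arcs. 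The two interior zeros $P_{1},P_{2}$ then force at least three nodal components of $v$, contradicting semi-stability.

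The new difficulty is that $\partial\Omega$ may contain a straight segment $S$ of vanishing curvature; if $e$ is perpendicular to $S$, then $v\equiv0$ on all of $S$ and the boundary sign count collapses. The key observation is that the hypothesis $f(0)\ge0$ still pins down the transversal behaviour of $v$ along such a segment. Indeed, on $S$ we have $\kappa\equiv0$, so the identity $u_{\tau\tau}=\kappa\,u_{\nu}$ yields $u_{\tau\tau}\equiv0$ on $S$, and from $-\Delta u=f(u)$ together with $u=0$ on $S$ we get
\[
u_{\nu\nu}=-f(0)\le0\quad\text{on }S.
\]
Applying a Hopf/Serrin boundary-point argument to the linearized equation for $v$, this sign of $u_{\nu\nu}$ controls $v_{\nu}$ along $S$ and forces $v$ to enter $\Omega$ with a definite sign on each side of $S$. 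One can then verify that $v$ still produces exactly two boundary sign arcs once the contribution of $S$ is correctly accounted for, and the isolated zero-curvature points are treated analogously by a local Taylor expansion of $\nu$ and $u_{\nu}$. With the boundary sign structure restored, the Cabr\'e--Chanillo nodal counting closes the contradiction.

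Existence of a critical point is immediate from $u>0$ in $\Omega$, $u=0$ on $\partial\Omega$, so the unique critical point $x_{0}$ is the interior maximum. For nondegeneracy, if $D^{2}u(x_{0})$ had a zero eigenvalue along some direction $e$, then $v=e\cdot\nabla u$ would satisfy $v(x_{0})=0$ and $\nabla v(x_{0})=D^{2}u(x_{0})\,e=0$, a double zero that contributes an additional nodal component, and the same count contradicts semi-stability. The main obstacle I expect is step three: executing the refined boundary analysis rigorously and verifying that the hypothesis $f(0)\ge0$ suffices to preserve the sign structure at the endpoints of a flat segment and, more delicately, when an isolated zero-curvature point coincides with a sign change of $e\cdot\nu$, a situation where both the tangential and normal behaviours of $v$ degenerate simultaneously and a careful expansion is required.
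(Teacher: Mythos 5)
There is a genuine gap at the heart of your uniqueness argument. Semi-stability only forbids a nodal domain of $v=e\cdot\nabla u$ whose boundary lies entirely inside $\Omega$ (since $Lv=0$, $v=0$ on $\partial H$ with $H\subset\Omega$, $|\Omega\setminus H|>0$ would force $\lambda_1(L,H)\le0$). But two interior critical points $P_1,P_2$ of $u$ are merely zeros of $v$; they lie on the nodal set of $v$ and by themselves do not create any extra nodal component. Extra branches of $\{v=0\}$ appear only if $P_i$ is a \emph{critical} point of $v$, i.e.\ $\nabla v(P_i)=\hess_u(P_i)e=\orig$, which fails whenever $e\notin\ker\hess_u(P_i)$ --- and for your choice $e\parallel P_2-P_1$ there is no reason for this kernel condition to hold. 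So the sentence ``the two interior zeros then force at least three nodal components'' does not follow, and this is precisely the point where both Cabr\'e--Chanillo and the paper need a further topological ingredient: the paper computes $\deg(T,\Omega,\orig)=1$ for the vector field $T$ of~\eqref{i2} (Lemma~\ref{sez2:lemma1}) and shows via Lemmas~\ref{b1} and~\ref{sez2:lemma2} that nondegenerate critical points of $u$ have index $+1$ while points of $M_\theta$ have index $-1$; combined with $M_\theta=\emptyset$ (Proposition~\ref{sez2:prop1}) this pins the number of critical points to one. Your nondegeneracy argument, by contrast, is essentially sound: a null direction $e$ at $x_0$ makes $x_0\in M_\theta$, producing crossing nodal branches and an enclosed subdomain, which is the paper's argument too.

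The treatment of the zero-curvature set is also only gestured at, and the geometry is reversed: on a flat horizontal segment $S\subset\partial\Omega$ one has $\nabla u=u_\nu\nu$, so $v=(e\cdot\nu)u_\nu$ vanishes identically on $S$ when $e$ is \emph{tangent} to $S$ (not perpendicular), which is exactly the delicate direction. Moreover the computation $u_{\nu\nu}=-f(0)-\curv u_\nu=-f(0)\le0$ on $S$ and a ``Hopf/Serrin argument for $v$'' do not by themselves restore the two-sign-arc structure; the paper's actual mechanism is different and quantitative: it shows (Lemma~\ref{sez3:lemma1}, and its analogue inside Theorem~\ref{sez3:thm2}) that at the distinguished boundary point where both $\curv=0$ and $u_{\tangvet\nu}=0$ one has $u_{xxy}(\orig)>0$, hence $\curv_y(\orig)<0$, and this sign is what makes the homotopy $tT+(1-t)(\cdot-p)$ admissible on $\partial\Omega_\eps$ after excising small balls around these points, so that $\deg(T,\Omega_\eps,\orig)=1$ and Corollary~\ref{sez2:cor1} applies. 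The hardest configuration you flag at the end (the zero of $e\cdot\nu$ coinciding with a zero-curvature point) is exactly the case your sketch leaves open, and it is where the paper's third-order expansion ($u_{xxx}=u_{xyy}=0$, $u_{xxy}>0$, ruling out a degree-three harmonic polynomial via Proposition~\ref{sez2:prop1}) does real work; without an argument of that precision, and without a replacement for the degree/index count above, the proof does not close.
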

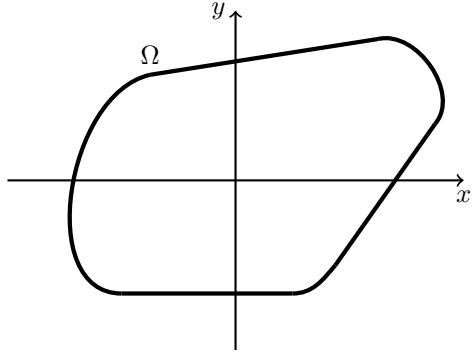
\begin{figure}[h]
\begin{tikzpicture}
	\draw[thick,->] (-3,0) -- (3,0) node[below] {$x$};
	\draw[thick,->] (0,-2.25) -- (0,2.25) node[left] {$y$};
	
	\coordinate (A) at  (-1.125,1.4);
	\coordinate (B) at  (-1.5,-1.5);
	\coordinate (C) at  (0.75,-1.5);
	\coordinate (D) at  (1.3125,-1.125);
	\coordinate (E) at  (2.625,0.75);
	\coordinate (F) at  (1.875,1.875);

	\draw [ultra thick] (A) to [out=193,in=180] (B);
	\draw [ultra thick] (B) to (C);
	\draw [ultra thick] (C) to [out=0,in=230] (D);
	\draw [ultra thick] (D) to (E);
	\draw [ultra thick] (E) to [out=50,in=13] (F);
	\draw [ultra thick] (F) to (A)node[above] {$\Omega$};
\end{tikzpicture}
\caption{Example of domain for which Theorem~\ref{mainthm} holds.}
\end{figure}

We point out that the extension of Theorem~\ref{i1} when the curvature of $\partial\Om$ vanishes somewhere is nontrivial. Indeed the proof in Theorem~\ref{i1} does not work in this case because the vector field $Z$ considered at page $7$  in~\cite{cc} is not well defined as the curvature of $\partial\Om$ vanishes.\\
Our main idea is to consider the vector field  $T:\overline\Omega\to\R^{2}$ (already used in~\cite{cc}) given by
\begin{equation}\label{i2}
T(q)=(u_{yy}(q)u_{x}(q)-u_{xy}(q)u_{y}(q),u_{xx}(q)u_{y}(q)-u_{xy}(q)u_{x}(q)).
\end{equation}
One of the main tool of our proof is to compute the \emph{topological degree} $\deg(T,\Om,\orig)$ of $T$. In particular if the curvature of $\partial\Om$ is positive then we have that (Lemma~\ref{sez2:lemma1})
\[
\deg(T,\Om, \orig)=1.
\]
A deeper analysis of the degree of $T$ concerns the $index$ of the zeros of $T$. It is not difficult to see (Lemma~\ref{b1}) that if $T(q)=0$ then either $q$ is a critical point of $u$ or $q$ is a critical point of the directional derivative 
\[
\frac{\partial u}{\partial v}=u_y(q)u_x-u_x(q)u_y.
\]
In Lemma~\ref{sez2:lemma2} we will compute the index in both cases. This is one of the crucial steps of the proof. We remark that this approach provides a quicker proof of Theorem~\ref{i1} because it simplifies the topological approach at pages $6-8$ in~\cite{cc}.\\
Eventually a careful analysis of the critical points of $\frac{\partial u}{\partial v}$ on $\partial\Om$ and the nodal lines of $\frac{\partial u}{\partial v}$ in $\Om$ ends the proof,\\
We stress that our result is sharp in the sense that it is possible to construct a bounded domain $\Om\subset\R^2$ and $u$ verifying
\begin{equation}
\begin{cases}
-\Delta u=1&\text{in }\Omega\\
u=0&\text{on }\partial\Omega,
\end{cases}
\end{equation}
such that $u$ admits  an arbitrarily number of critical points. Here $\partial\Om$ admits points with negative curvature but $\Om$  is ``close'' to a convex domain (see~\cite{gg3} for a precise statement).

The paper is organized as follows: in Section~\ref{s1} we recall some preliminary results (basically proved in~\cite{cc}) and prove the main properties of the vector field $T$. In Section~\ref{s2} we prove Theorem~\ref{mainthm}.

\sezione{Preliminary results}\label{s1}

Let $\Omega\subset\R^{2}$ be a smooth bounded domain and let $u$ be a solution to~\eqref{pb1} where $f\in\mathcal{C}^{1,\alpha}(\R^+,\R^+)$ with $\alpha\in(0,1]$. Recall that $u\in\mathcal C^{3}(\overline\Omega)$ by the standard regularity theory.

As in~\cite{cc}, we introduce the following notation: for every $\theta\in[0,2\pi)$ we write $e_{\theta}=(\cos\theta,\sin\theta)$ and we set
\begin{align*}
u_{\theta}&:=\scal{\nabla u, e_{\theta}}=\frac{\partial u}{\partial e_{\theta}},\\
N_{\theta}&:=\set{p\in\overline{\Omega}|u_{\theta}(p)=0},\\
M_{\theta}&:=\set{p\in N_{\theta}|\nabla u_{\theta}(p)=\orig}.
\end{align*}
Moreover, if the set $\set{u=c}$ is smooth then its curvature is given by
\begin{equation*}
\curv:=-\frac{u_{yy}u_{x}^2-2u_{xy}u_{x}u_{y}+u_{xx}u_{y}^2}{|\nabla u|^3}.
\end{equation*}

The following result tells us that the nodal sets of a semi-stable solution of~\eqref{pb1} are smooth curves without self intersection and every critical point of $u$ is nondegenerate.
\begin{proposition}
\label{sez2:prop1}
Let $\Omega\subset\R^{2}$ be a smooth bounded convex domain whose boundary has nonnegative curvature and such that the curvature vanishes only at isolated points. Assume that $f(0)\ge0$. If $u$ is a semi-stable solution of~\eqref{pb1} then for every $\theta\in[0,2\pi)$, the nodal set $N_{\theta}$ is a smooth curve in $\overline\Omega$ without self-intersection  which hits $\partial\Omega$ at the two end points of $N_{\theta}$. Moreover in any critical point of $u$ the Hessian has rank $2$.
\end{proposition}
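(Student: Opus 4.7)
The plan is to follow Cabré-Chanillo's strategy, adapted so that the boundary curvature is only required to be nonnegative with isolated zeros. The key object is the directional derivative $u_\theta=\scal{\nabla u,e_\theta}$, which satisfies the linearized equation $-\Delta u_\theta = f'(u)\,u_\theta$ in $\Omega$. By the Hartman-Wintner theorem, at any interior zero $p\in\Omega$ of $u_\theta$ the vanishing order $k\ge 1$ is finite and the leading term is a nonzero homogeneous harmonic polynomial of degree $k$; hence near $p$ the nodal set $N_\theta$ consists locally of $2k$ smooth arcs meeting at $p$ at equal angles, with $u_\theta$ alternating sign between consecutive sectors. In particular, $p\in M_\theta\cap\Omega$ would force at least four branches at $p$.

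I would next analyze $u_\theta$ on $\partial\Omega$. Since $u\equiv 0$ on $\partial\Omega$, $\nabla u=(\partial u/\partial\nu)\nu$, and the Hopf lemma (applicable since $f(0)\ge 0$) gives $\partial u/\partial\nu<0$ strictly. Hence $u_\theta|_{\partial\Omega}=(\partial u/\partial\nu)(\nu\cdot e_\theta)$ vanishes exactly where $\nu\perp e_\theta$. Parametrizing $\partial\Omega$ by arclength and letting $\alpha(s)$ be the angle of $\nu$, one has $\alpha'(s)=\curv\ge 0$ with zeros only at isolated points, so $\alpha$ is strictly increasing and sweeps out $2\pi$; consequently $\nu\cdot e_\theta$ vanishes at exactly two boundary points. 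Thus $N_\theta\cap\partial\Omega$ consists of exactly two points.

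The heart of the proof is to invoke semi-stability to rule out simultaneously an interior degenerate zero $M_\theta\cap\Omega\ne\emptyset$ and the existence of more than two nodal domains of $u_\theta$. In either case I would exhibit a connected component $D$ of $\{u_\theta>0\}$ or $\{u_\theta<0\}$ whose closure lies inside $\Omega$: only two nodal domains can reach $\partial\Omega$ because $u_\theta|_{\partial\Omega}$ has just two sign-changes, so any extra branching (from an interior order $k\ge 2$ zero) or any third nodal domain must close off inside. Extending $u_\theta|_D$ by zero yields $\psi\in H^1_0(\Omega)\setminus\{0\}$ of constant sign solving $-\Delta\psi=f'(u)\psi$ on $D$, i.e.\ a first Dirichlet eigenfunction of $-\Delta-f'(u)$ on $D$ with eigenvalue $0$. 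Strict monotonicity of the first eigenvalue under strict inclusion $D\subsetneq\Omega$ then forces $\lambda_1(-\Delta-f'(u),\Omega)<0$, contradicting semi-stability.

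With $M_\theta\cap\Omega=\emptyset$, the implicit function theorem makes $N_\theta\cap\Omega$ a smooth $1$-manifold, which, together with the two boundary endpoints (where one must also verify $\nabla u_\theta\ne\orig$ via the boundary version of the Hopf-type analysis), assembles into a smooth simple arc in $\overline\Omega$. For the Hessian claim, if $\det\hess u(p)=0$ at a critical point $p$, then some unit vector $e_\theta$ lies in $\ker\hess u(p)$, giving $u_\theta(p)=0$ and $\nabla u_\theta(p)=\hess(u)(p)\,e_\theta=\orig$, so $p\in M_\theta\cap\Omega$, a contradiction, proving $\hess u(p)$ has rank $2$. The main obstacle is the topological counting in the semi-stability step: carefully tracing how each nodal branch of $u_\theta$ terminates, and showing that extra branching or an extra nodal component inevitably closes off into a bounded region compactly contained in $\Omega$, which is precisely what turns the eigenvalue-monotonicity comparison into a contradiction.
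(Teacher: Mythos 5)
Your proposal follows the same Cabr\'e--Chanillo scheme as the paper: $u_\theta$ solves the linearized equation, interior zeros have the local harmonic-polynomial structure (so points of $M_\theta$ produce extra branching), semi-stability together with strict monotonicity of the first eigenvalue excludes any nodal domain of $u_\theta$ whose boundary lies in $N_\theta$, the Hopf lemma plus convexity give exactly two boundary zeros of $u_\theta$ (your Gauss-map phrasing, $\alpha'=\curv\ge0$ with isolated zeros so $\alpha$ is strictly increasing, is a clean way to state what the paper asserts), and the rank-two Hessian claim follows from $M_\theta=\emptyset$ exactly as in the paper. All of that matches.

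The gap sits precisely at the point where this proposition goes beyond Cabr\'e--Chanillo: the behavior of $N_\theta$ at a boundary endpoint $p_i$ with $\curv(p_i)=0$. You assemble the arc by proposing to ``verify $\nabla u_\theta\ne\orig$'' at the two endpoints, but at a zero-curvature endpoint this cannot be verified and may in fact fail: there the tangent is parallel to $e_\theta$, so the tangential component of $\nabla u_\theta(p_i)$ is $u_{\tangvet\tangvet}(p_i)=\curv(p_i)\,u_\nu(p_i)=0$, and the remaining component $u_{\tangvet\nu}(p_i)$ can vanish as well (this degenerate case genuinely occurs, e.g.\ for domains symmetric about the normal line at the flat point; it is exactly Case 2 of Theorem~\ref{sez3:thm1}, which requires the separate Lemma~\ref{sez3:lemma1}). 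Hence the implicit function theorem is unavailable at such an endpoint and your assembly of $N_\theta$ into a simple arc reaching both boundary points is unproved exactly in the new situation the proposition is meant to cover. The paper handles this endpoint globally, not via nondegeneracy: if no branch of $N_\theta$ entered $\Omega$ near $p_1$, the branch issuing from $p_2$ would have to close up and bound a domain $H\subset\Omega$ with $\partial H\subset N_\theta$, contradicting semi-stability, and more than one branch at $p_1$ yields the same contradiction; this forces exactly one nodal curve to reach $p_1$. Your own ``extra branches must close off inside'' counting contains the right idea, so the repair is to run that enclosure argument at the degenerate endpoints instead of invoking $\nabla u_\theta(p_i)\ne\orig$ there.
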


\begin{proof}
The proof is given in~\cite{cc} in the case of positive curvature. For reader's convenience first we report the key steps of the proof in~\cite{cc} and next we add the case of zero curvature at isolated points of the boundary.

For any $\theta\in[0,2\pi)$ we have that
\begin{itemize}
\item Around any $p\in (N_{\theta}\cap\Omega)\setminus M_{\theta}$ the nodal set $N_{\theta}$ is a smooth curve.

\item If $p\in M_{\theta}\cap\Omega$, then $N_{\theta}$ consists of at least two smooth curves intersecting transversally at $p$.

\item There is no nonempty domain $H\subset\Omega$ such that $\partial H\subset N_{\theta}$ (where the boundary of $H$ is considered as a subset of $\R^{2}$).

\item If $p_i\in N_{\theta}\cap\partial\Omega$ by the implicit function theorem one has that around $p_{i}$, $N_{\theta}$ is a smooth curve intersecting $\partial\Omega$ transversally  in $p_{i}$. 


\item If $N_{\theta}\cap\partial\Omega=\{p_1,p_2\}$ then $M_\theta=\emptyset$  and any critical point of $u$ verifies that $\hess_u(p)$ has rank $2$.
\end{itemize}
We stress that all the above properties hold for semi-stable solutions $u$ in \emph{any} domain $\Om$.

Now we consider the case where the curvature of the boundary vanishes at isolated points. By the compactness of $\partial\Omega$ and the smoothness of $u$ we have that the curvature $\curv$ vanishes only at finitely many points of $\partial\Omega$.

Assume $\curv(p_{1})=0$ and $\curv(p_{2})>0$. If there exists $\rho>0$ such that $N_{\theta}\cap B_{\rho}(p_{1})\cap\Omega=\emptyset$ then the nodal curve $N_{\theta}$ starting from $p_{2}$ has to enclose a nonempty domain $H\subset\Omega$ with $\partial H\subset N_{\theta}$, but this yields to a contradiction. Otherwise $N_{\theta}$ consists of at least one curve starting from $p_{1}$ and disjoint from $\partial\Omega$ (since $N_{\theta}\cap\partial\Omega=\{p_{1},p_{2}\}$). If there are more then one curve this implies again that there exists a subdomain $H$ as before and this is a contradiction. Hence as in~\cite{cc}, around $p_{1}$ we have that $N_{\theta}$ is a smooth curve intersecting $\partial\Omega$ in $p_{1}$.

If $\curv(p_{1})=\curv(p_{2})=0$ we can argue similarly to get that around each $p_{i}$, $N_{\theta}$ is a smooth curve intersecting $\partial\Omega$ in $p_{i}$, for $i=1,2$. Note that if there exists $\rho>0$ such that $N_{\theta}\cap B_{\rho}(p_{1})\cap\Omega=N_{\theta}\cap B_{\rho}(p_{2})\cap\Omega=\emptyset$ then, since $N_{\theta}\cap\Omega$ is nonempty by the fact that there exists at least a critical point of $u$ in $\Omega$,  the nodal set enclose again a domain as before.

The remaining claims of the proposition follow arguing as in~\cite{cc}.
\end{proof}

For $u$ solution of~\eqref{pb1}, consider the map $T:\overline\Omega\to\R^{2}$ given by
\[
T(q)=(u_{yy}(q)u_{x}(q)-u_{xy}(q)u_{y}(q),u_{xx}(q)u_{y}(q)-u_{xy}(q)u_{x}(q)).
\]
Since $u\in\mathcal{C}^{3}(\overline{\Omega})$, $T$ is of class $\mathcal{C}^{1}$. In next lemmata we state some important properties of the vector field $T$.
\begin{lemma}
\label{sez2:lemma1}
Let $D\subset\Omega$ be a smooth convex domain such that $\partial D$ has positive curvature. Then $\deg(D,T,\orig)=1$.
\end{lemma}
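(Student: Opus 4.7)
The plan is to compute $\deg(T,D,\orig)$ by exhibiting a homotopy, through nowhere-vanishing maps $\partial D\to\R^{2}\setminus\{\orig\}$, from the restriction $T|_{\partial D}$ to the outward unit normal field $\nu$ of $\partial D$. Since $\partial D$ is smooth and strictly convex, the Gauss map $\nu:\partial D\to S^{1}$ is an orientation-preserving diffeomorphism and so has winding number $+1$ around the origin; by homotopy invariance of the degree this will yield $\deg(T,D,\orig)=1$.

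Concretely, I would parameterize $\partial D$ by arc length $q=q(s)$ in the counter-clockwise sense, with unit tangent $\tau$, outward unit normal $\nu$, and Frenet relation $\tau'(s)=-\kappa(s)\,\nu(s)$, where $\kappa>0$ denotes the curvature of $\partial D$. Writing $T=\mathrm{adj}(\hess u)\,\nabla u$ in the moving orthonormal frame $(\tau,\nu)$ gives at once
\begin{equation*}
T(q(s))\cdot\nu(s)=u_{\tau\tau}\,u_{\nu}-u_{\tau\nu}\,u_{\tau}.
\end{equation*}
Using the chain-rule identity $u_{\tau\tau}=\phi''(s)+\kappa(s)\,u_{\nu}$, with $\phi(s):=u(q(s))$ and $u_{\tau}=\phi'(s)$, this rewrites as
\begin{equation*}
T(q(s))\cdot\nu(s)=\kappa(s)\,u_{\nu}^{2}+\phi''(s)\,u_{\nu}-u_{\tau\nu}\,\phi'(s).
\end{equation*}

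The core of the argument is to show that the right-hand side is strictly positive on $\partial D$. In the main intended application $D=\Omega$ the Dirichlet datum $u|_{\partial\Omega}\equiv 0$ forces $\phi'\equiv\phi''\equiv 0$, while the Hopf boundary point lemma gives $u_{\nu}\ne 0$, whence $T\cdot\nu=\kappa\,u_{\nu}^{2}>0$ on $\partial\Omega$. Once $T\cdot\nu>0$ is in hand, the straight-line homotopy
\begin{equation*}
H_{t}(s):=(1-t)\,T(q(s))+t\,|T(q(s))|\,\nu(s),\qquad t\in[0,1],
\end{equation*}
joins $T|_{\partial D}$ to a positive multiple of $\nu$ and never vanishes, because its $\nu$-component equals $(1-t)\,T\cdot\nu+t\,|T|>0$ throughout. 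By homotopy invariance, $\deg(T,D,\orig)=\deg(\nu|_{\partial D})=1$.

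The main obstacle I anticipate is the sign analysis of $T\cdot\nu$ on $\partial D$ when $D\subsetneq\Omega$: for $D=\Omega$ it is immediate from the Dirichlet condition and Hopf, but for a general smooth convex $D$ with positive boundary curvature one has to control the extra terms $\phi''\,u_{\nu}$ and $u_{\tau\nu}\,\phi'$ that appear once $u$ is no longer constant on $\partial D$, and to show that the leading term $\kappa\,u_{\nu}^{2}$ still dominates. All remaining ingredients---Frenet frames, homotopy invariance of the degree, and the computation of the winding number $+1$ of the Gauss map of a smooth strictly convex planar curve---are classical and should cause no difficulty.
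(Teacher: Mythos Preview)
Your argument is correct for the case $D=\Omega$, and in fact that is the only case the paper's proof actually treats: although the lemma is stated for a general $D\subset\Omega$, the proof in the paper replaces $D$ by $\Omega$ throughout and relies on $u\equiv 0$ on $\partial\Omega$ (to identify $\nabla u$ with $u_\nu\nu$, to interpret the curvature term as the curvature of $\partial\Omega$, and to invoke the Hopf lemma). So your ``main obstacle'' for $D\subsetneq\Omega$ is not a defect of your approach relative to the paper---the paper does not handle that case either, and the lemma is only ever applied with $D=\Omega$.

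The route you take is genuinely different from the paper's, though the two are close cousins. The paper homotopes $T$ to the shifted identity $q\mapsto q-p$ (for a fixed interior point $p$) and checks admissibility by dotting $tT(q)+(1-t)(q-p)$ with $\nabla u$; on $\partial\Omega$ this reduces to $-\tau\,\curv\,|\nabla u|^3$ on one side and $(\tau-1)\,u_\nu\,\langle q-p,\nu\rangle$ on the other, and a sign comparison using $\curv>0$, $u_\nu<0$, and strict star-shapedness forces a contradiction. You instead homotope $T|_{\partial\Omega}$ to the Gauss map $\nu$ via the key inequality $T\cdot\nu=\kappa\,u_\nu^2>0$ and read off the degree as the winding number $+1$ of the Gauss map of a strictly convex curve. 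Both arguments hinge on the same scalar fact---that $T$ points into the outward half-space along $\partial\Omega$ because $\kappa>0$ and $u_\nu\ne 0$---but your packaging avoids the auxiliary choice of $p$ and the star-shapedness consideration, at the price of invoking the Gauss map. Either way the proof is short; the paper's version has the slight advantage that the same linear homotopy to $q-p$ is reused verbatim in the later arguments on $\Omega_\varepsilon$.
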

\begin{proof}
Let $p=(x_{p},y_{p})\in\Omega$ and consider the homotopy
\begin{align*}
H:[0,1]\times\overline{\Omega}&\to\R^{2}\\
(t,q)&\mapsto tT(q)+(1-t)(q-p).
\end{align*}
$H$ is an admissible homotopy, i.e. $H\big(t,(x,y)\big)\ne\orig$ for any $t\in[0,1]$ and $(x,y)\in\partial\Om$.
 Otherwise, there would exist $\tau\in[0,1]$ and $\overline{q}=(\overline{x},\overline{y})\in\partial \Omega$ such that $H(\tau,\overline{q})=0$, i.e.
\begin{equation}
\label{sez2:lemma1:eq1}
\begin{cases}
\tau(u_{yy}(\overline{q})u_{x}(\overline{q})-u_{xy}(\overline{q})u_{y}(\overline{q}))=(\tau-1)(\overline{x}-x_{p})\\
\tau(u_{xx}(\overline{q})u_{y}(\overline{q})-u_{xy}(\overline{q})u_{x}(\overline{q}))=(\tau-1)(\overline{y}-y_{p}).
\end{cases}
\end{equation}
Then, multiplying the first equation by $u_{x}(\overline{q})$, the second by $u_{y}(\overline{q})$ and summing we get
\begin{align*}
\tau(u_{yy}(\overline{q})u_{x}^2(\overline{q})-2u_{xy}(\overline{q})u_{x}(\overline{q})u_{y}&(\overline{q})+u_{xx}(\overline{q})u_{y}^2(\overline{q}))\\
	&=(\tau-1)[(\overline{x}-x_{p})u_{x}(\overline{q})+(\overline{y}-y_{p})u_{y}(\overline{q})],
\end{align*}
and writing $\nu=(\nu_{x},\nu_{y})$ for the unit normal exterior vector in $\overline{q}$, it follows
\begin{equation}
\label{sez2:lemma1:eq2}
-\tau \curv(\overline{q})\abs{\nabla u(\overline{q})}^3=(\tau-1)u_{\nu}(\overline{q})[(\overline{x}-x_{p})\nu_{x}+(\overline{y}-y_{p})\nu_{y}].
\end{equation}
Since $\Omega$ is strictly star-shaped with respect to the point $p$ we have $(\overline{x}-x_{p})\nu_{x}+(\overline{y}-y_{p})\nu_{y}>0$. Since $\curv(\overline{q})>0$ and $u_{\nu}(\overline{q})<0$ by~\eqref{sez2:lemma1:eq2} we get $\tau=0$ and thanks to~\eqref{sez2:lemma1:eq1} this yields to $\overline{q}=p$ which is clearly a contradiction.

Then $H$ is an admissible homotopy and so we conclude
\[
\deg(\Omega,T,\orig)=\deg(\Omega,Id-p,\orig)=1.\qedhere
\]
\end{proof}

\begin{lemma}
\label{b1}
If $q\in\Omega$ is such that $T(q)=\orig$ then either
\begin{equation*}
q\hbox{ is a critical point for $u$},
\end{equation*}
or
\begin{equation*}
\hbox{ there exists $\theta\in[0,2\pi)$ such that $q\in M_{\theta}$}.
\end{equation*}
\end{lemma}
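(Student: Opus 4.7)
The plan is to split into cases according to whether $\nabla u(q)$ vanishes. If $\nabla u(q)=\orig$ there is nothing to prove, so the substantive case is $\nabla u(q)\ne\orig$, and there I would produce an explicit $\theta$ with $q\in M_\theta$ by choosing $e_\theta$ orthogonal to $\nabla u(q)$.

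Concretely, assume $\nabla u(q)\ne\orig$ and set
\[
e_\theta=\frac{1}{\abs{\nabla u(q)}}\bigl(-u_y(q),u_x(q)\bigr),
\]
so that $\cos\theta=-u_y(q)/\abs{\nabla u(q)}$ and $\sin\theta=u_x(q)/\abs{\nabla u(q)}$. By construction $u_\theta(q)=\scal{\nabla u(q),e_\theta}=0$, so $q\in N_\theta$. It remains to check $\nabla u_\theta(q)=\orig$. Differentiating $u_\theta=u_x\cos\theta+u_y\sin\theta$ I get
\[
\nabla u_\theta=\bigl(u_{xx}\cos\theta+u_{xy}\sin\theta,\;u_{xy}\cos\theta+u_{yy}\sin\theta\bigr).
\]
Substituting the chosen values of $\cos\theta$ and $\sin\theta$ yields, up to the positive factor $1/\abs{\nabla u(q)}$,
\[
\nabla u_\theta(q)=\frac{1}{\abs{\nabla u(q)}}\bigl(-(u_{xx}u_y-u_{xy}u_x)(q),\;(u_{yy}u_x-u_{xy}u_y)(q)\bigr),
\]
and both components are precisely the two components of $T(q)$ (up to sign). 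Since $T(q)=\orig$ by hypothesis, both vanish, so $\nabla u_\theta(q)=\orig$ and hence $q\in M_\theta$, which concludes the proof.

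There is no real obstacle here: once one recognises that the components of $T$ are exactly the components of $\nabla u_\theta$ for the $\theta$ perpendicular to $\nabla u(q)$ (multiplied by $\abs{\nabla u(q)}$), the lemma follows by a direct algebraic verification. The only thing to be careful about is the dichotomy at $\nabla u(q)=\orig$, where the direction $e_\theta$ is not well-defined but the conclusion of the lemma holds trivially through the other alternative.
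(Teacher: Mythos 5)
Your proof is correct and follows essentially the same route as the paper: in the non-critical case you pick $e_\theta$ perpendicular to $\nabla u(q)$ (the paper uses $(u_y,-u_x)/\abs{\nabla u(q)}$, you use the opposite unit vector, which is immaterial) and observe that the components of $\nabla u_\theta(q)$ are, up to sign and a positive factor, exactly the components of $T(q)$. You merely write out explicitly the verification the paper labels ``straightforward,'' so there is nothing to add.
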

\begin{proof}
Of course if $q$ is a critical point for $u$ then $T(q)=\orig$. So suppose that $q$ is not a critical point for $u$ and consider $\theta\in[0,2\pi)$ such that $(\cos\theta,\sin\theta)=\left(\frac{u_y(q)}
{\sqrt{u_x^2(q)+u_y^2(q)}},-\frac{u_x(q)}{\sqrt{u_x^2(q)+u_y^2(q)}}\right)$. Then it is straightforward to verify that
\[
u_\theta=\cos\theta u_x+\sin\theta u_y
\]
satisfies $u_\theta(q)=0$ and $\nabla u_\theta(q) =\orig$. Hence $q\in M_{\theta}$.
\end{proof}

\begin{remark}
\label{sez2:rmk1}
We point out that if $q\in M_{\theta}$ then up to a rotation we can assume that 
\begin{equation}\label{b2}
u_{x}(q)=u_{xx}(q)=u_{xy}(q)=0.
\end{equation}
\end{remark}
From now if $q$ is an isolated zero of $T$, for $r>0$ small enough, we denote by $ind(T,q)=\deg\big(T,B(q,r),\orig\big)$. 
\begin{lemma}
\label{sez2:lemma2}
Let $q\in\Omega$ be such that $T(q)=\orig$. Then we have that
\begin{itemize}
\item If  $q$ is a nondegenerate critical point for $u$, then $ind(T,q)=1$.
\item If $q\in M_\theta$ for some $\theta\in[0,2\pi)$ and it is a nondegenerate critical point for $u_\theta$  then $ind(T,q)=-1$.
\end{itemize}
\end{lemma}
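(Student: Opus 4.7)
My plan is to compute the Jacobian $DT(q)$ explicitly in each case and use the fact that for an isolated nondegenerate zero of a $\mathcal{C}^1$ vector field, $ind(T,q)=\operatorname{sign}\det DT(q)$.

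For the first case, suppose $q$ is a nondegenerate critical point of $u$, so $u_x(q)=u_y(q)=0$ while $\det\hess u(q)=u_{xx}u_{yy}-u_{xy}^2\neq 0$ at $q$. Differentiating the components of $T$ and evaluating at $q$, every term containing $u_x$ or $u_y$ as a factor vanishes, and the mixed off-diagonal entries cancel in pairs (for instance $\partial_y T_1=u_{yyy}u_x+u_{yy}u_{xy}-u_{xyy}u_y-u_{xy}u_{yy}=0$ at $q$). One is left with
\[
DT(q)=\bigl(u_{xx}u_{yy}-u_{xy}^2\bigr)(q)\,\mathrm{Id},
\]
so $\det DT(q)=\bigl(\det\hess u(q)\bigr)^{2}>0$ and $ind(T,q)=+1$.

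For the second case, by Remark~\ref{sez2:rmk1} we may rotate coordinates so that $u_\theta=u_x$, which means that at $q$ we have $u_x=u_{xx}=u_{xy}=0$, while $u_y(q)\neq 0$ (else $q$ would be a critical point of $u$). Substituting these vanishings into the derivatives of $T_1$, $T_2$ kills every term not multiplied by $u_y$, and one obtains
\[
DT(q)=u_{y}(q)\begin{pmatrix}-u_{xxy}&-u_{xyy}\\ u_{xxx}&u_{xxy}\end{pmatrix}(q),
\qquad
\det DT(q)=u_{y}(q)^{2}\bigl(u_{xxx}u_{xyy}-u_{xxy}^{2}\bigr)(q).
\]
Now observe that the parenthesis is exactly $\det\hess(u_x)(q)=\det\hess(u_\theta)(q)$. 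Since $q\in M_\theta\cap\Omega$, Proposition~\ref{sez2:prop1} tells us that $N_\theta$ consists of at least two smooth curves crossing transversally at $q$; together with $u_\theta(q)=0$ and the nondegeneracy hypothesis, this forces $q$ to be a saddle of $u_\theta$, so $\det\hess(u_\theta)(q)<0$. Consequently $\det DT(q)<0$ and $ind(T,q)=-1$.

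The whole argument is a routine Jacobian computation; the only genuinely delicate point is the sign in case two, and that is not extracted from the nondegeneracy alone but from the geometric structure of the nodal set $N_\theta$ near $q$ provided by Proposition~\ref{sez2:prop1}, which rules out the local-extremum alternative. This is the step I would be most careful to state cleanly, since without it one only knows $\det\hess(u_\theta)(q)\neq 0$ and not its sign.
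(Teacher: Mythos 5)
Your Jacobian computations are correct and, in both cases, are exactly the paper's: in Case 1 you get $(\det\hess_u(q))^2>0$, and in Case 2 the same matrix with determinant $u_y^2(q)\,\bigl(u_{xxx}u_{xyy}-u_{xxy}^2\bigr)(q)$. The only genuine difference is how you obtain the sign of the bracket in Case 2. The paper differentiates the equation in~\eqref{pb1}: since $u_\theta=u_x$ solves $-\Delta u_x=f'(u)u_x$ and $u_x(q)=0$, one has $u_{xyy}(q)=-u_{xxx}(q)$, so the determinant becomes $-u_y^2(q)\bigl(u_{xxx}^2+u_{xxy}^2\bigr)(q)$, negative precisely by nondegeneracy; equivalently, $\hess_{u_\theta}(q)$ has zero trace, so a nondegenerate critical point lying on the zero level of $u_\theta$ is automatically a saddle. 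You instead deduce the saddle property from the local structure of $N_\theta$ at $q$. That deduction is sound (a nondegenerate extremum would make the zero set locally a single point), but your citation is imprecise: the \emph{statement} of Proposition~\ref{sez2:prop1} does not assert the ``two transversal curves'' property --- indeed under its hypotheses $M_\theta=\emptyset$, so the situation you invoke never occurs there; the fact you need is a step inside its proof, coming from the Caffarelli--Friedman local description of $u_\theta$ (valid for any solution, in any domain), and should be cited as such. This also makes your route heavier than necessary and ties a purely local lemma to nodal-set machinery, whereas the trace-zero (PDE) observation gives the sign in one line; with the citation fixed, your proof is complete and equivalent to the paper's.
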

\begin{proof}
One has
\[
\mathrm{Jac}_{T}=
\begin{pmatrix}
u_{xx}u_{yy}-u_{xy}^2+u_{x}u_{xyy}-u_{y}u_{xxy} & u_{x}u_{yyy}-u_{y}u_{xyy}\\
u_{y}u_{xxx}-u_{x}u_{xxy} &u_{xx}u_{yy}-u_{xy}^2+u_{y}u_{xxy}-u_{x}u_{xyy}
\end{pmatrix}.
\]
If $q$ is a critical point for $u$ we have 
\[
\det\mathrm{Jac}_{T}(q)=(\det\hess_{u}(q))^{2},
\]
and since it is nondegenerate we get that $ind(T,q)=1$.\\
On the other hand if $q\in M_\theta$ by Remark~\ref{sez2:rmk1} we have that~\eqref{b2} holds and then
\begin{align*}
\det\mathrm{Jac}_{T}(q)&=-u_{y}^{2}(q)(u_{xxy}^{2}(q)-u_{xxx}(q)u_{xyy}(q))\\
	&=-u_{y}^{2}(q)(u_{xxy}^{2}(q)+u_{xxx}^{2}(q)),
\end{align*}
where the last equality follows differentiating~\eqref{pb1}. Finally the nondegeneracy of $q$ for $\nabla u_\theta$ gives that $u_{xxy}^{2}(q)+u_{xxx}^{2}(q)\ne0$ and the claim follows.
\end{proof}

As remarked in the introduction, the previous lemma gives a simplified proof of Cabr\'e-Chanillo's result.
\begin{proof}[Proof of Theorem~\ref{i1}]
By Proposition~\ref{sez2:prop1} we have that $M_\theta=\emptyset$ for any $\theta\in[0,2\pi)$. Hence if $T(q)=0$ then $q$ is a critical point of $u$. Moreover it is nondegenerate (otherwise $u\in M_\theta$ for some $\theta\in[0,2\pi)$). 

Finally by Lemma~\ref{sez2:lemma1} and Lemma~\ref{sez2:lemma2} we have
\[
1=\deg(T,\Om,\orig)=\sum_{\hbox{$q$ such that $\nabla u=0$}}\!\!\!\!\!\!ind(T,q)=\sharp\{\hbox{critical points of }u\},
\]
which gives the claim.
\end{proof}
\begin{corollary}
\label{sez2:cor1}
Let $D\subset\overline\Omega$ be such that $M_{\theta}\cap D=\emptyset$ for all $\theta\in[0,2\pi)$ and $\orig\not\in T(\partial D)$. If $\deg(D,T,\orig)=1$, then $u$ has exactly one critical point in $D$ which is a maximum with negative definite Hessian.
\end{corollary}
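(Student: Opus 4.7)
The plan is a three-step reduction: first, pin down the zeros of $T$ inside $D$ as critical points of $u$ using Lemma~\ref{b1}; second, show those critical points are automatically nondegenerate by another application of the $M_\theta$ hypothesis; third, apply Lemma~\ref{sez2:lemma2} to convert $\deg(D,T,\orig)=1$ into a count, and identify the unique critical point with a maximum of $u$.

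Since $\orig\notin T(\partial D)$ the degree is well defined and every zero of $T$ in $D$ lies in the interior. Lemma~\ref{b1} says each such zero is either a critical point of $u$ or a point of some $M_{\theta}$, and the second alternative is excluded by the hypothesis $M_{\theta}\cap D=\emptyset$. Next, I would argue nondegeneracy by contradiction: if $q\in D$ satisfies $\nabla u(q)=\orig$ and $\det\hess_{u}(q)=0$, picking $e_{\theta}$ that spans $\ker\hess_{u}(q)$ gives $u_{\theta}(q)=\scal{\nabla u(q),e_{\theta}}=0$ together with $\nabla u_{\theta}(q)=\hess_{u}(q)\,e_{\theta}=\orig$, forcing $q\in M_{\theta}\cap D$. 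Applying the first case of Lemma~\ref{sez2:lemma2} then gives $ind(T,q)=1$ for every zero of $T$ in $D$, whence
\[
1=\deg(D,T,\orig)=\#\{q\in D:\nabla u(q)=\orig\},
\]
producing a unique, nondegenerate critical point $q_{0}\in D$.

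To identify $q_{0}$ as a maximum with negative definite Hessian, I would invoke the boundary maximum principle: since $u>0$ in $\Omega$ and $u=0$ on $\partial\Omega$, $u$ attains its global maximum on $\overline{\Omega}$ at some interior critical point $p^{*}\in\Omega$. In the setting of the intended application the region $D$ will contain $p^{*}$, so uniqueness forces $p^{*}=q_{0}$; the Hessian at a maximum is negative semi-definite, and nondegeneracy upgrades it to negative definite.

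The hard part is precisely this last identification, because $\det\mathrm{Jac}_{T}=(\det\hess_{u})^{2}>0$ at any nondegenerate critical point of $u$ assigns the same index $+1$ to maxima, minima, and saddles. Consequently $\deg(D,T,\orig)=1$ does not by itself exclude a local minimum or a saddle, and the ``maximum'' conclusion relies on external maximum-principle input placing the global maximum of $u$ inside $D$.
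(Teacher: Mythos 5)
Your argument is correct and is essentially the paper's own proof of the counting part: zeros of $T$ in $D$ are critical points of $u$ by Lemma~\ref{b1} and the hypothesis $M_{\theta}\cap D=\emptyset$, the same hypothesis applied to a kernel direction of $\hess_u$ gives nondegeneracy, and Lemma~\ref{sez2:lemma2} turns $\deg(D,T,\orig)=1$ into the existence of exactly one (nondegenerate) critical point in $D$. Your closing caveat is also well placed: since $ind(T,q)=1$ for maxima, minima and saddles alike, the identification of the unique critical point as a maximum is not a consequence of the degree count, and the paper's one-line proof leaves this step implicit; it is supplied exactly as you describe in every application of the corollary, where $D$ is $\Omega$ or $\Omega_{\eps}$ with $\abs{\nabla u}\neq0$ on the removed set, so that $D$ contains all critical points of $u$ and in particular the interior point where the positive function $u$ (vanishing on $\partial\Omega$) attains its maximum, whose Hessian is then negative definite by nondegeneracy. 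So read the corollary with that contextual hypothesis in mind; with it, your proof is complete and matches the intended one.
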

\begin{proof}
Considering that $M_{\theta}\cap D=\emptyset$ for all $\theta\in[0,2\pi)$ implies that the Hessian of $u$ has maximal rank in every critical point in $D$, the proof easily follows from Remark~\ref{sez2:rmk1} and Lemma~\ref{sez2:lemma2}.
\end{proof}
We end this section by proving a technical result which will be useful later. The proof is essentially the one in~\cite{pm}.

\begin{lemma}
\label{sez2:lemma3}
Let $\Omega\subset\set{(x,y)\in\R^2|y<0}$ be a bounded smooth domain such that $\partial\Omega$ is tangent to the $x$-axis in $\orig$ and consider $F\in\mathcal{C}^2(\overline{\Omega},\R)$ with
\[
F(\orig)=F_{x}(\orig)=F_{y}(\orig)=F_{xy}(\orig)=0,\quad F_{xx}(\orig)<0,\quad\text{and}\quad F_{yy}(\orig)>0.
\]
Then there exist $\delta,\eta>0$ and two functions $Y_{1}\in\mathcal{C}^{1}([-\delta,0],[-\eta,0])$ and $Y_{2}\in\mathcal{C}^{1}([0,\delta],[-\eta,0])$ such that
\begin{enumerate}[(i)]
\item $Y_{1}(0)=Y_{2}(0)=0$,
\item $Y_{1}'(0)=\sqrt{-\frac{F_{xx}(\orig)}{F_{yy}(\orig)}}$, $Y_{2}'(0)=-\sqrt{-\frac{F_{xx}(\orig)}{F_{yy}(\orig)}}$,
\item $F(x,y)=0$ if and only if $y=Y_{1}(x)$ for $x\in[-\delta,0]$ and $y=Y_{2}(x)$ for $x\in[0,\delta]$.
\end{enumerate}
\end{lemma}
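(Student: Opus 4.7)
The plan is to reduce the equation $F(x,y)=0$ to a one-variable problem via the substitution $y=xz$, which captures the two tangent directions of the zero set at $\orig$. Set $a:=-F_{xx}(\orig)>0$ and $b:=F_{yy}(\orig)>0$, so that the candidate slopes are $\pm z_{0}:=\pm\sqrt{a/b}$. Since $F$ and $\nabla F$ vanish at $\orig$, Taylor's formula with integral remainder produces the Hadamard-type decomposition
\[
F(x,y)=x^{2}A(x,y)+2xyB(x,y)+y^{2}C(x,y),
\]
with $A,B,C\in\mathcal{C}^{0}(\overline{\Omega})$, $A(\orig)=-a/2$, $B(\orig)=0$ (using $F_{xy}(\orig)=0$), $C(\orig)=b/2$. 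Setting $\phi(x,z):=A(x,xz)+2zB(x,xz)+z^{2}C(x,xz)$ one has $F(x,xz)=x^{2}\phi(x,z)$, with $\phi$ continuous and $\phi(0,z)=\tfrac{1}{2}(bz^{2}-a)$ vanishing exactly at $\pm z_{0}$.

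Next I would solve $\phi(x,z)=0$ in a neighborhood of each of $(0,\pm z_{0})$ using strict monotonicity in $z$. For $x\ne 0$ one has $\phi_{z}(x,z)=F_{y}(x,xz)/x$, and the $\mathcal{C}^{1}$ expansion $F_{y}(x,y)=by+o(|x|+|y|)$ (using $F_{y}(\orig)=F_{xy}(\orig)=0$) gives $\phi_{z}(x,z)=bz+o(1)$ as $x\to 0$, uniformly on compact sets of $z$. Consequently, on a fixed strip $[z_{0}-\eta,z_{0}+\eta]$ and for $|x|\le\delta$, $\phi(x,\cdot)$ is strictly increasing, while by continuity $\phi(x,z_{0}-\eta)<0<\phi(x,z_{0}+\eta)$. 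The intermediate value theorem then produces a unique continuous $Z_{+}(x)\in(z_{0}-\eta,z_{0}+\eta)$ with $\phi(x,Z_{+}(x))=0$ and $Z_{+}(0)=z_{0}$; analogously one constructs $Z_{-}$ near $-z_{0}$.

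To upgrade to $\mathcal{C}^{1}$, set $Y(x):=xZ_{+}(x)$. For $x\ne 0$, $F_{y}(x,Y(x))=bY(x)+o(|x|)=(bz_{0}+o(1))\,x\ne 0$, so the classical implicit function theorem applied to $F(x,y)=0$ at $(x,Y(x))$ yields $Y\in\mathcal{C}^{1}(\{x\ne 0\})$. Since $Y(x)/x=Z_{+}(x)\to z_{0}$, $Y$ is differentiable at $0$ with $Y'(0)=z_{0}$; and from the Taylor expansions $F_{x}(x,y)=-ax+o(|x|+|y|)$, $F_{y}(x,y)=by+o(|x|+|y|)$ along $y=xZ_{+}(x)$,
\[
Y'(x)=-\frac{F_{x}(x,Y(x))}{F_{y}(x,Y(x))}=-\frac{-ax+o(|x|)}{bz_{0}x+o(|x|)}\longrightarrow\frac{a}{bz_{0}}=z_{0},
\]
so $Y'$ is continuous at $0$ as well. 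The branches compatible with $\Omega\subset\{y<0\}$ are $Y_{1}(x):=xZ_{+}(x)$ for $x\in[-\delta,0]$ (slope $+z_{0}$ at $0$) and $Y_{2}(x):=xZ_{-}(x)$ for $x\in[0,\delta]$ (slope $-z_{0}$ at $0$); both take values in $[-\eta,0]$ after possibly shrinking $\delta$.

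Finally, completeness of the zero set follows from sign analysis. For $x=0$, $F(0,y)=y^{2}C(0,y)>0$ for $y\ne 0$ small since $C(\orig)=b/2$. For $x\ne 0$, the expansion $F(x,y)=-\tfrac{a}{2}x^{2}+\tfrac{b}{2}y^{2}+o(x^{2}+y^{2})$ shows $F>0$ when $|y|/|x|$ is too large and $F<0$ when $|y|/|x|$ is too small, so any zero of $F$ close to $\orig$ forces $z=y/x$ into a bounded annulus around $\pm z_{0}$, where the monotonicity argument already singles out the unique branches $Z_{\pm}$. The main obstacle is the implicit function step: the Hadamard-style decomposition of a $\mathcal{C}^{2}$ function gives only $\mathcal{C}^{0}$ coefficients, so $\phi$ need not be $\mathcal{C}^{1}$ in $x$ and the classical IFT cannot be invoked; one must replace it by monotonicity in $z$ (available because $F_{y}\in\mathcal{C}^{1}$) and recover $\mathcal{C}^{1}$ regularity of the branches by patching the standard IFT on $\{x\ne 0\}$ with a direct difference-quotient computation at $x=0$.
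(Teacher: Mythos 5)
Your proof is correct, but it follows a genuinely different construction from the paper's. You desingularize at the origin by the blow-up $y=xz$ after a Hadamard-type decomposition $F=x^{2}A+2xyB+y^{2}C$, locate the two branches as zeros of $\phi(x,z)$ near the limiting slopes $\pm\sqrt{-F_{xx}(\orig)/F_{yy}(\orig)}$ via strict monotonicity of $\phi$ in $z$ (coming from $F_{y}(x,y)=F_{yy}(\orig)y+o(|x|+|y|)$), and obtain completeness of the zero set from the quadratic expansion forcing $y/x\to\pm\sqrt{a/b}$. The paper instead works slice by slice on vertical lines inside $\overline\Omega$: it uses $F_{\tangvet\tangvet}(\orig)=F_{xx}(\orig)<0$ to get $F<0$ on $\partial\Omega$ near $\orig$, strict convexity of $y\mapsto F(x,y)$ (from $F_{yy}>0$ nearby) together with $F(x,-\eta')>0$ to produce, by the intermediate value theorem, exactly one zero on each vertical segment, which yields statement (iii) immediately and keeps the whole construction inside $\overline\Omega$. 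The two arguments coincide in the regularity step: implicit function theorem away from $x=0$ (you via $F_{y}\ne0$ along the branch, the paper via $F_{y}(x,y_x)<0$ from convexity), then $Y(x)=(\sqrt{-F_{xx}(\orig)/F_{yy}(\orig)}+o(1))x$ and $Y'=-F_{x}/F_{y}\to Y'(0)$ to get $\mathcal C^{1}$ up to the origin. Two points you should tighten, though neither is fatal: (a) the Taylor/Hadamard representation with integral remainder along segments to $\orig$ is not literally available for $F\in\mathcal C^{2}(\overline\Omega)$ only, since such segments may leave $\Omega$; invoke a Whitney--Seeley extension of $F$ to a neighborhood of $\overline\Omega$ (the paper's Peano expansions need the same care); (b) since your branches are built for the extended function, you should check explicitly that $(x,Y_{1}(x))$ and $(x,Y_{2}(x))$ lie in $\overline\Omega$ for small $|x|$ — this follows because $Y_{i}(x)\approx\mp\sqrt{a/b}\,|x|$ while the boundary graph is $o(|x|)$ by tangency — whereas the paper's slicing argument gets this for free.
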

\begin{proof}
Since $F_{xx}(\orig)<0$ and $F_{yy}(\orig)>0$, there exists $\eta>0$ such that
\[
F_{xx}(x,y)<0\quad\text{and}\quad F_{yy}(x,y)>0,\quad\text{for }(x,y)\in\overline{\Omega}\cap[-\eta,\eta]\times[-\eta,0].
\]
From now on we work with $(x,y)\in\overline{\Omega}\cap[-\eta,\eta]\times[-\eta,0]$. Since $F_{\tangvet}(\orig)=F_{x}(\orig)=0$ and $F_{\tangvet\tangvet}(\orig)=F_{xx}(\orig)<0$ it follows that
\[
F(x,y)<0,\quad\text{for }(x,y)\in\partial\Omega\text{ with }0<\abs{x}\le\eta',
\]
where $0<\eta'\le\eta$. Moreover, by the strictly convexity of the function
\[
y\mapsto F(x,y),\quad\text{for }x\text{ fixed},
\]
and the fact that $F_{y}(\orig)=0$, one has
\[
F(0,y)>0,\quad\text{for }y\not=0.
\]
In particular $F(0,-\eta')>0$ and by continuity $F(x,-\eta')>0$ for $-\delta<x<\delta$, where $0<\delta\le\eta'$. Now, for $-\delta<x\le0$ the function
\[
y\mapsto F(x,y),
\]
is such that
\[
F(x,\overline{y})<0,\text{ for }(x,\overline{y})\in\partial\Omega\quad F(x,-\eta')>0,\quad F_{yy}(x,\cdot)>0,
\]
and since $\overline{y}>-\eta'$ then there exists an unique $y_x\in(\overline{y},-\eta')$ such that $F(x,y_x)=0$; furthermore, since $F_{yy}(x,y)>0$, one has $F_y(x,y_x)<0$. Then the zero set of $F$ is given by a continuous function $Y_{1}:[-\delta,0]\to[-\eta,0]$ (where the continuity in $0$ holds since we can choose $\eta$ arbitrarily small). From the implicit function theorem we have $Y_{1}\in\mathcal C^0([-\delta,0])\cap\mathcal C^1([-\delta,0))$ with
\begin{equation}
\label{sez2:lemma2:eq1}
F_{x}(x,Y_{1}(x))+F_{y}(x,Y_{1}(x))Y_{1}'(x)=0,\quad\text{for }x\not=0.
\end{equation}
Moreover, from
\begin{equation}
\label{sez2:lemma2:eq2}
F(x,y)=\frac{1}{2}(F_{xx}(\orig)+o(1))x^2+\frac{1}{2}(F_{yy}(\orig)+o(1))y^2,
\end{equation}
we deduce that $(x,Y_{1}(x))$ belongs to a cone around the line $y=-\frac{F_{xx}(\orig)}{F_{yy}(\orig)}x$ and it is derivable in $0$ with $Y_{1}'(0)=\sqrt{-\frac{F_{xx}(\orig)}{F_{yy}(\orig)}}$. Indeed, for $y=Y_{1}(x)$ in~\eqref{sez2:lemma2:eq2} we obtain
\[
Y_{1}(x)^2=-\frac{F_{xx}(\orig)}{F_{yy}(\orig)+o(1)}x^2+\frac{x^2}{F_{yy}(\orig)+o(1)}o(1),
\]
then 
\begin{equation}
\label{sez2:lemma2:eq3}
Y_{1}(x)=\left(\sqrt{-\frac{F_{xx}(\orig)}{F_{yy}(\orig)}}+o(1)\right)x,
\end{equation}
and for $x\to0$ we get the claim. Moreover, we have
\begin{align*}
F_{x}(x,y)&=F_{xx}(\orig)x+o(x+y),\\
F_{y}(x,y)&=F_{yy}(\orig)y+o(x+y),
\end{align*}
and then from~\eqref{sez2:lemma2:eq1} and~\eqref{sez2:lemma2:eq3} it follows
\begin{align*}
Y_{1}'(x)&=-\frac{F_{x}(x,Y_{1}(x))}{F_{y}(x,Y_{1}(x))}\\
	&=-\frac{F_{xx}(\orig)x+o(x)+o(Y_{1}(x))}{F_{yy}(\orig)Y_{1}(x)+o(x)+o(Y_{1}(x))}\\
	&=-\frac{F_{xx}(\orig)+o(1)}{F_{yy}(\orig)\sqrt{-\frac{F_{xx}(\orig)}{F_{yy}(\orig)}}+o(1)}=\sqrt{-\frac{F_{xx}(\orig)}{F_{yy}(\orig)}}+o(1)\quad\text{for }x\to0,\\
\end{align*}
that is $Y_{1}\in\mathcal C^1([-\delta,0])$. For $0\le x<\delta$ we can argue analogously.
\end{proof}

\begin{remark}
\label{sez2:rmk3}
A similar result can be proved for interior points of the domain, see~\cite{pm}.
\end{remark}

\sezione{Proof of main theorems}\label{s2}
In this section we prove the main results of the paper. First of all we fix the assumptions on the domain $\Om$.
\vskip0.2cm\noindent
\begin{equation}\label{c0}
\hbox{{\bf Assumption on $\Om$}}
\end{equation}
Suppose that $\Om$ is a convex domain such that the curvature is zero in a single point of his boundary and positive elsewhere. Up to a rotation and a translation we assume $\Omega\subset\set{(x,y)\in\R^2|y<0}$ such that $\partial\Omega$ is tangent to the $x$-axis in $\orig$, which is the only point where the curvature is zero. 

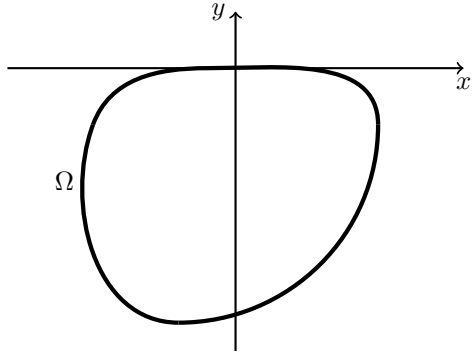
\begin{figure}[h]
\begin{tikzpicture}
	\draw[thick,->] (-3,0) -- (3,0) node[below] {$x$};
	\draw[thick,->] (0,-3.75) -- (0,0.75) node[left] {$y$};
	
	\coordinate (A) at  (-0.375,0);
	\coordinate (B) at  (-1.875,-0.75);
	\coordinate (C) at  (-0.75,-3.375);
	\coordinate (D) at  (1.875,-0.75);

	\draw [ultra thick] (A) to [out=180,in=70] (B);
	\draw [ultra thick] (B) to [out=250,in=180] (C);
	\draw [ultra thick] (C) to [out=0,in=270] (D);
	\draw [ultra thick] (D) to [out=90,in=0] (A);
	
	\node at (-2.25,-1.5) {$\Omega$};
	
\end{tikzpicture}
\caption{Example of domain which satisfies~\eqref{c0}.}
\end{figure}

\begin{theorem}
\label{sez3:thm1}
Suppose that $\Om$ satisfies~\eqref{c0}. If $u$ is a semi-stable solution of~\eqref{pb1} then $u$ has an unique critical point $x_0$. Moreover $x_0$ is the maximum of $u$ and it has negative definite Hessian.
\end{theorem}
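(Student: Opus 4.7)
The plan is to derive the theorem from Corollary~\ref{sez2:cor1} applied with $D=\Omega$, which reduces matters to three checks: (i)~$M_\theta\cap\Omega=\emptyset$ for every $\theta\in[0,2\pi)$; (ii)~$T\ne\orig$ on $\partial\Omega$; and (iii)~$\deg(T,\Omega,\orig)=1$. Item~(i), together with nondegeneracy of every interior critical point of $u$ and smoothness up to $\overline\Omega$ of every nodal set $N_\theta$, is delivered directly by Proposition~\ref{sez2:prop1}, since (c0) places us in its framework of an isolated zero-curvature boundary point.

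For (ii), on $\partial\Omega\setminus\{\orig\}$ the computation inside the proof of Lemma~\ref{sez2:lemma1} goes through verbatim using $\curv>0$, Hopf's lemma $u_\nu<0$, and $\nabla u=u_\nu\nu$. At $\orig$ itself, tangency of $\partial\Omega$ to the $x$-axis together with $\curv(\orig)=0$ force $u_x(\orig)=u_{xx}(\orig)=0$, so
$$
T(\orig)=\bigl(-u_{xy}(\orig)u_y(\orig),\,0\bigr).
$$
The crucial step is to exclude $u_{xy}(\orig)=0$, which would put $\orig$ in $M_0\cap\partial\Omega$ and make $T$ vanish on $\partial\Omega$. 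I would exclude it by combining the single-branch structure of $N_0$ at $\orig$ coming from Proposition~\ref{sez2:prop1} with Lemma~\ref{sez2:lemma3}: under $u_{xy}(\orig)=0$, the relation $u_{xxx}(\orig)+u_{xyy}(\orig)=0$ obtained by differentiating $-\Delta u=f(u)$ yields a Hessian of $u_x$ at $\orig$ of indefinite sign, so Lemma~\ref{sez2:lemma3} (applied after the harmless rotation that enforces the $F_{xy}$-normalisation) would force $N_0$ to split into two transversal smooth branches at $\orig$, contradicting Proposition~\ref{sez2:prop1}.

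For (iii), I use the homotopy $H(t,q)=tT(q)+(1-t)(q-p)$ of Lemma~\ref{sez2:lemma1} with the base point $p=(0,-a)$, chosen on the vertical axis with $a>0$ so small that $p\in\Omega$. On $\partial\Omega\setminus\{\orig\}$ the argument of Lemma~\ref{sez2:lemma1} applies verbatim via positive curvature and star-shapedness with respect to $p$; at $q=\orig$ the choice of $p$ produces
$$
H(t,\orig)=\bigl(-tu_{xy}(\orig)u_y(\orig),\,(1-t)a\bigr),
$$
whose second component is nonzero for $t<1$ and whose first component is nonzero at $t=1$ by step (ii). Hence $H$ is admissible, $\deg(T,\Omega,\orig)=\deg(\mathrm{Id}-p,\Omega,\orig)=1$, and Corollary~\ref{sez2:cor1} furnishes a unique critical point $x_0\in\Omega$ with negative definite Hessian; it is automatically a maximum because $u>0$ in $\Omega$ with $u=0$ on $\partial\Omega$. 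The essential obstacle is the branch-count contradiction that rules out $u_{xy}(\orig)=0$ in step (ii); once this exclusion is in place, the choice of base point $p$ on the $y$-axis neutralises the single remaining degenerate direction at $\orig$, and the Lemma~\ref{sez2:lemma1}-style degree calculation proceeds routinely.
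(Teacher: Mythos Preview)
Your attempt to exclude the case $u_{xy}(\orig)=0$ outright is where the argument breaks. This case cannot be ruled out: take any domain satisfying~\eqref{c0} that is symmetric about the $y$-axis (for instance with boundary locally given by $y=-x^4$), and a symmetric solution $u$; then $u_x$ is odd in $x$, forcing $u_{xy}(\orig)=0$. The paper treats this as a genuine second case and devotes Lemma~\ref{sez3:lemma1} plus the excised domain $\Omega_\eps=\Omega\setminus\overline B_\eps$ to it.

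Your branch-counting contradiction does not go through because the two nodal branches of $u_x$ at $\orig$ are tangent to the coordinate axes, and the one tangent to the $x$-axis need not enter $\Omega$. Concretely: writing $\partial\Omega$ locally as $y=\phi(x)$, the nonnegativity of the curvature forces $\phi''(0)=\phi'''(0)=0$, and differentiating $u(x,\phi(x))=0$ three times then yields $u_{xxx}(\orig)=0$; combined with $u_{xyy}(\orig)=-u_{xxx}(\orig)=0$ from the equation, the Hessian of $u_x$ at $\orig$ is
\[
\begin{pmatrix}0 & u_{xxy}(\orig)\\ u_{xxy}(\orig) & 0\end{pmatrix},
\]
whose zero set is exactly the two axes. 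The branch tangent to the $y$-axis enters $\Omega$, but the branch tangent to the $x$-axis is tangent to $\partial\Omega$ itself and may lie entirely outside $\overline\Omega\setminus\{\orig\}$ (e.g.\ if $u_x=xy$ near $\orig$), so Proposition~\ref{sez2:prop1} is not contradicted. Your appeal to Lemma~\ref{sez2:lemma3} after a rotation is illegitimate for the same reason: the rotation diagonalising the Hessian moves $\partial\Omega$ so that it is no longer tangent to the $x$-axis, and the conclusion of Lemma~\ref{sez2:lemma3} that both $Y_1,Y_2$ take values in $[-\eta,0]$ relies precisely on that tangency.

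In the paper, Case~2 is handled by showing (Lemma~\ref{sez3:lemma1}) that $u_{xxy}(\orig)>0$, hence $\curv_y(\orig)<0$; one then computes the degree of $T$ on $\Omega_\eps$ and uses the sign of $\curv_y(\orig)$ to rule out zeros of the homotopy on $\partial B_\eps\cap\Omega$. Your Case~1 argument (when $u_{xy}(\orig)\ne0$) and your choice of $p$ on the $y$-axis are fine and match the paper; it is only the reduction to Case~1 that fails.
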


To prove the theorem, we need the following auxiliary lemma.

\begin{lemma}
\label{sez3:lemma1}
Suppose that $\Om$ satisfies~\eqref{c0}, $u$ is a semi-stable solution of~\eqref{pb1} and $u_{xy}(\orig)=0$. Then $\curv_{y}(\orig)<0$.
\end{lemma}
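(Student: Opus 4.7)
The plan is to reduce the inequality $\curv_y(\orig)<0$ to the sign statement $u_{xxy}(\orig)>0$, and then pin down that sign from a nodal-set analysis of $u_x$.

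I begin by collecting what the geometric setup forces at $\orig$. Writing $\partial\Omega$ locally as the graph $\{y=g(x)\}$, the tangency to the $x$-axis with $\Omega\subset\{y<0\}$ gives $g(0)=g'(0)=0$, the vanishing of the boundary curvature at $\orig$ gives $g''(0)=0$, and the fact that $g\le 0$ together with $g''<0$ away from $\orig$ forces $g'''(0)=0$ and $g''''(0)<0$. Implicit differentiation of $u(x,g(x))\equiv 0$ then gives $u_x(\orig)=u_{xx}(\orig)=u_{xxx}(\orig)=0$; evaluating the equation at $\orig$ yields $u_{yy}(\orig)=-f(0)$, and differentiating it once in $x$ gives $u_{xyy}(\orig)=0$. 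Combining these vanishings with the hypothesis $u_{xy}(\orig)=0$, a direct computation of $\curv_y$ from the definition of $\curv$ collapses all but one term, yielding
\[
\curv_y(\orig)=-\frac{u_{xxy}(\orig)}{|u_y(\orig)|},
\]
so it suffices to show $u_{xxy}(\orig)>0$.

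For this sign I study the nodal set $N_0=\{u_x=0\}$. By Proposition~\ref{sez2:prop1}, $N_0$ is a single smooth curve in $\overline\Omega$ without self-intersection hitting $\partial\Omega$ at exactly two endpoints, and $\Omega\setminus N_0$ decomposes into two components on each of which $u_x$ has constant sign. Using the boundary identity $u_x(x,g(x))=-u_y(x,g(x))\,g'(x)$ together with $g'(x)=\tfrac{g''''(0)}{6}x^3+O(x^4)$ and $g''''(0)<0$, one checks that along $\partial\Omega$ near $\orig$ we have $u_x<0$ for $x>0$ and $u_x>0$ for $x<0$, with $\orig$ the only local zero. Hence $\orig$ is one endpoint of $N_0$, and the two nearby components of $\Omega\setminus N_0$ are the $x>0$ and $x<0$ sides, carrying $u_x<0$ and $u_x>0$ respectively. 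On the other hand, the vanishings above give the Taylor expansion
\[
u_x(x,y)=u_{xxy}(\orig)\,xy+o\bigl(|(x,y)|^2\bigr),
\]
so at any interior point with $x>0$, $y<0$ both small the left hand side is negative while the leading right hand side has sign $-\mathrm{sign}(u_{xxy}(\orig))$. This forces $u_{xxy}(\orig)>0$.

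The main obstacle I expect is the degenerate case $u_{xxy}(\orig)=0$, in which the quadratic leading term vanishes and the sign argument breaks down. To exclude it I push the expansion one order further: using $u_{xxyy}(\orig)=-u_{xxxx}(\orig)$ (from $\Delta u_{xx}(\orig)=0$) and $u_{xyyy}(\orig)=-u_{xxxy}(\orig)$ (from $\Delta u_{xy}(\orig)=0$), the third order Taylor polynomial of $u_x$ at $\orig$ rewrites as
\[
\tfrac{1}{6}\operatorname{Re}\!\bigl[\bigl(u_{xxxx}(\orig)-i\,u_{xxxy}(\orig)\bigr)(x+iy)^3\bigr],
\]
a nonzero harmonic homogeneous polynomial of degree three since $u_{xxxx}(\orig)=-u_y(\orig)g''''(0)<0$. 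Such a polynomial has six simple nodal rays through $\orig$, three of which lie in $\{y<0\}$; by a standard perturbation argument this would force $N_0$ to have three distinct branches at $\orig$, contradicting Proposition~\ref{sez2:prop1}. Hence $u_{xxy}(\orig)\ne 0$, the preceding argument gives $u_{xxy}(\orig)>0$, and consequently $\curv_y(\orig)<0$.
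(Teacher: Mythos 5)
The main gap is your claim that the hypotheses ``force'' $g''''(0)<0$ for the boundary graph $y=g(x)$. They do not. Assumption~\eqref{c0} only says that the curvature vanishes at $\orig$ and is positive elsewhere; this gives $g''\le 0$ with $g''(0)=0$ and $g''<0$ for $x\ne0$ near $0$ (hence indeed $g'''(0)=0$), but the curvature may vanish at $\orig$ to arbitrarily high (even infinite) order --- think of a boundary locally of the form $y=-x^{6}$ or $y=-e^{-1/x^{2}}$ --- so $g''''(0)$ can perfectly well be $0$. This is fatal for your exclusion of the degenerate case $u_{xxy}(\orig)=0$: there you need the cubic harmonic polynomial in the Taylor expansion of $u_x$ to be nonzero, and your only source of nonvanishing is $u_{xxxx}(\orig)=-u_y(\orig)g''''(0)<0$, which evaporates when $g''''(0)=0$. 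The paper avoids this entirely: in its Step~1 it does not try to identify the cubic, but uses the fact (Caffarelli--Friedman type structure theorem, already invoked in the proof of Proposition~\ref{sez2:prop1}) that $u_x$ solves the linearized equation and is not identically zero, hence near a zero it behaves like a \emph{nonzero} homogeneous harmonic polynomial of degree equal to its vanishing order; if $u_{xxy}(\orig)=0$ that order is at least three, so $N_0$ has at least two branches entering $\Omega$, contradicting Proposition~\ref{sez2:prop1}. Without either $g''''(0)<0$ or this structure theorem, your argument cannot rule out $u_{xxy}(\orig)=0$. (The same unjustified $g''''(0)<0$ also appears in your boundary sign computation for $u_x$, but there it is harmless: $g'>0$ for $x<0$ and $g'<0$ for $x>0$ already follow from $g''<0$ away from $0$.)

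The positive part of your argument (once $u_{xxy}(\orig)\ne0$ is known) is essentially sound and parallels the paper's Steps~2--4, though stated loosely: the two local components of $\Omega\setminus N_0$ near $\orig$ are not literally the half-planes $\{x>0\}$ and $\{x<0\}$, since the nodal branch entering at $\orig$ is only \emph{tangent} to the $y$-axis; to compare the sign of $u_x$ with the leading term $u_{xxy}(\orig)xy$ you should evaluate at points such as $(s,-s)$ and use that the branch lies in a cone around the $y$-axis (this is exactly what Lemma~\ref{sez2:lemma3} provides, and what the paper exploits by monotonicity of $u_{xx}$ along the parametrized branch). So the route is the same in spirit as the paper's, but the degenerate-case step needs to be repaired as above before the proof is complete.
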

\begin{proof}
From assumption $\curv(\orig)=0$ and from $u_x(\orig)=0$ we easily get that $u_{xx}(\orig)=0$. It follows that
\[
\curv_{y}(\orig)=-\frac{u_{xxy}(\orig)u_{y}^2(\orig)}{\abs*{\nabla u}^{3}}.
\]
We claim that
\begin{equation}\label{c1}
u_{xxy}(\orig)>0,
\end{equation}
which ends the proof since $u_{y}(\orig)=u_{\nu}(\orig)<0$ by the Hopf boundary lemma. In order to prove~\eqref{c1} we divide the proof in four steps.
\vskip0.2cm
\emph{Step 1: $u_{xxy}(\orig)\not=0$.}\\
Since $u_{y}(\orig)\not=0$ by the implicit function theorem we get that near the origin one has $u(x,y)=0$ if and only if $y=\phi(x)$, for some function $\phi$. In particular, by the assumptions on the boundary of $\Omega$, we have $\phi(0)=\phi'(0)=\phi''(0)=\phi'''(0)=0$ and differentiating $u(x,\phi(x))=0$ we deduce $u_{xxx}(\orig)=0$. Moreover, differentiating~\eqref{pb1}, we get that also $u_{xyy}(\orig)=0$. If $u_{xxy}(\orig)=0$, then the Taylor expansion of $u_x$ in a neighborhood of $\orig$ becomes
\[
u_x(x,y)=\text{homogeneous harmonic polynomial of order three}+O\big((x^2+y^2)^2\big),
\]
This means that locally $N_{0}=\set{u_x=0}$ consists of at least three branches of curves and at least two must be entering in $\Omega$, a contradiction with Proposition~\ref{sez2:prop1}.
\vskip0.2cm
\emph{Step 2: parametrization of $N_{0}$ near the origin.}\\
Let $F(x,y)=u_{x}(x,y)$ with $(x,y)\in\Omega$, then up to a rotation and eventually changing sign one has
\[
F(\orig)=F_{x}(\orig)=F_{y}(\orig)=F_{xy}(\orig)=0,\quad -F_{xx}(\orig)=F_{yy}(\orig)>0.
\]
Then we can apply Lemma~\ref{sez2:lemma3} and taking into account~\eqref{sez2:lemma2:eq3} and the fact that $u_{x}=0$ consist in exactly one branch entering in $\Omega$ from $\orig$, the nodal curve $N_{0}$ can be parametrized as
\[
\mathcal{C}=
\begin{cases}
x=g(t)\\
y=t
\end{cases}\quad t\in[-\delta,0],
\]
for some $\delta>0$ and $g(t)=o(t)$.
\vskip0.2cm
\emph{Step 3: $u_{xx}\big(g(t),t\big)\le0$ for $t$ close to $0$.}\\
Let $(\overline x,\overline y)\in\partial\Omega$ close to $\orig$ with $\overline x<0$ and $(g(\overline y),\overline y)\in\mathcal{C}$. Since $\overline x<0$, $u_x(x,\overline y)>0$ for $\overline{x}\le x<g(\overline y)$ and $u_x(g(\overline y),\overline y)=0$ we derive that  $u_{xx}(g(\overline y),\overline y)\le0$.
\vskip0.2cm
\emph{Step 4: end of the proof.}\\
Set $H(t):=u_{xx}\big(g(t),t\big)$  for $t\in[-\delta,0]$. By the previous step we have that $H(t)\le0$ and by the assumptions on $\Om$ one has $H(0)=0$. Hence
\[
H'(0)\ge0.
\]
Finally, $H'(t)=u_{xxx}\big(g(t),t\big)g'(t)+u_{xxy}\big(g(t),t\big)$ and so $0\le H'(0)=u_{xxy}\big(\orig)$ which gives the claim thanks to Step 1.
\end{proof}
\begin{proof}[Proof of Theorem~\ref{sez3:thm1}]
As remarked we have $u_x(\orig)=u_{xx}(\orig)=0$ and $u_{y}(\orig)<0$. We now distinguish the two cases.  according to whether $u_{xy}(\orig)$ vanishes or not.
\vskip0.2cm
\emph{Case 1: $u_{xy}(\orig)\not=0$.}\\
Similarly as in the proof of Lemma~\ref{sez2:lemma1} we consider the map $T:\Omega\to\R^{2}$ defined in~\eqref{i2} and the homotopy
\begin{align*}
H:[0,1]\times\overline{\Omega}&\to\R^{2}\\
(t,q)&\mapsto tT(q)+(1-t)(q-p),
\end{align*}
for some $p=(x_{p},y_{p})\in\Omega$. Let us show that $H$ is an admissible homotopy. Otherwise there would exist $\tau\in[0,1]$ and $\overline{q}=(\overline{x},\overline{y})\in\partial \Omega$ such that~\eqref{sez2:lemma1:eq1} and~\eqref{sez2:lemma1:eq2} hold. Then we deduce $\curv(\overline{q})=0$ and $\tau=1$, which thanks to the first equation of~\eqref{sez2:lemma1:eq1} yields to $-u_{y}(\orig)u_{xy}(\orig)=0$: a contradiction.
Then $H$ is an admissible homotopy and so
\[
\deg(\Omega,T,\orig)=\deg(\Omega,Id-p,\orig)=1.
\]
The claim follows from Corollary~\ref{sez2:cor1}.
\vskip0.2cm
\emph{Case 2: $u_{xy}(\orig)=0$.}\\
This case is more delicate because $T(\orig)=\orig$ and since $\orig\in\partial\Omega$ the degree of $T$ is not well defined. For this reason we introduce $\Omega_{\eps}:=\Omega\setminus\overline{B}_{\eps}$ where $\eps>0$ is chosen such that
\begin{equation}\label{sez3:thm1:eq1}	
\abs{\nabla u}\not=0,\quad\text{in }\overline{B}_{\eps}\cap\overline\Omega,
\end{equation}
and
\begin{equation}\label{sez3:thm1:eq2}
\Omega_{\eps}\text{ is star-shaped with respect to some }p=(x_{p},y_{p}),
\end{equation}
(such an $\eps$ exists by the Hopf boundary lemma). Now, consider again the map $T\in\mathcal C^1(\overline{\Omega}_\eps,\R^{2})$. In this way the degree of $T$ is well defined and if the homotopy
\begin{align}
\label{sez3:thm1:eq3}H_\eps:[0,1]\times\overline{\Omega}_{\eps}&\to\R^{2}\\
\label{sez3:thm1:eq4}(t,q)&\mapsto tT(q)+(1-t)(q-p),
\end{align}
is admissible then we deduce
\[
\deg(\Omega_{\eps},T,\orig)=1.
\]
Assume, by contradiction, that the homotopy $H_{\eps}$ is not admissible. Hence, there exist $\tau_{\eps}\in[0,1]$ and $q_{\eps}=(x_{\eps},y_{\eps})\in\partial \Omega_{\eps}$ such that $H_{\eps}(\tau_{\eps},q_{\eps})=0$, i.e.
\begin{equation}
\label{sez3:thm1:eq5}
\begin{cases}
\tau_{\eps}(u_{yy}(q_{\eps})u_{x}(q_{\eps})-u_{xy}(q_{\eps})u_{y}(q_{\eps}))=(\tau_{\eps}-1)(x_{\eps}-x_{p})\\
\tau_{\eps}(u_{xx}(q_{\eps})u_{y}(q_{\eps})-u_{xy}(q_{\eps})u_{x}(q_{\eps}))=(\tau_{\eps}-1)(y_{\eps}-y_{p}).
\end{cases}
\end{equation}
Proceeding as in the previous step we get
\begin{equation}
\label{b3}
-\tau_{\eps}\curv(q_{\eps})\abs{\nabla u(q_{\eps}}^3=(\tau_{\eps}-1)[(x_{\eps}-x_{p})u_{x}(q_{\eps})+(y_{\eps}-y_{p})u_{y}(q_{\eps})].
\end{equation}
Since for all $q=(x,y)\in\partial\Omega$, writing again $\nu=(\nu_{x},\nu_{y})$ for the unit normal exterior vector in $q$, we have
\[
(x-x_{p})u_{x}(q)+(y-y_{p})u_{y}(q)=u_{\nu}(q)[(x-x_{p})\nu_{x}+(y-y_{p})\nu_{y}]<0,
\]
by continuity it follows that $(x_{\eps}-x_{p})u_{x}(q_{\eps})+(y_{\eps}-y_{p})u_{y}(q_{\eps})<0$ for all $q_{\eps}\in\partial\Omega_{\eps}$.
Since $\curv>0$ on $\partial\Omega\setminus\{\orig\}$, from~\eqref{b3} it follows that $q_{\eps}\in\partial\overline{B}_{\eps}\cap\Omega$ and $\curv(q_{\eps})\le0$. Then the vertical line $x=x_{\eps}$ hits $\partial\Omega$ in an unique point $(x_{\eps},y(x_{\eps}))$, with $y(x_{\eps})> y_{\eps}$. Since $\curv(x_{\eps},y(x_{\eps}))\ge0$, there exists $p_{\eps}\in B_{\eps}\cap\Omega$ such that $\curv_{y}(p_{\eps})\ge0$ and for $\eps\to0$ we have $\curv_{y}(\orig)\ge0$, but this is in contradiction with Lemma~\ref{sez3:lemma1}.

Since $\deg(\Omega_{\eps},T,\orig)=1$ it is possible to apply Corollary~\ref{sez2:cor1} to get that there exists exactly one critical point in $\Omega_{\eps}$ (a maximum with negative definite Hessian). Moreover by the assumptions on $\eps$ that there are no critical points in $\overline{B}_{\eps}\cap\Omega$ and the claim follows.
\end{proof}

We now treat domains where the curvature vanishes at a segment of its boundary.

\begin{figure}[h]
\begin{tikzpicture}
	\draw[thick,->] (-3,0) -- (3,0) node[below] {$x$};
	\draw[thick,->] (0,-3.75) -- (0,0.75) node[left] {$y$};
	
	\coordinate (A) at  (-1.125,0);
	\coordinate (B) at  (0.375,-3.375);
	\coordinate (C) at  (1.875,0);

	\draw [ultra thick] (A) to [out=180,in=180] (B);
	\draw [ultra thick] (B) to [out=0,in=0] (C);
	\draw [ultra thick] (C) to (A);
	
	\node at (0.75,0)[above] {$\Gamma$};
	\node at (-1.5,-2.25) {$\Omega$};
	
\end{tikzpicture}
\caption{Example of domain for which Theorem~\ref{sez3:thm2} holds.}
\end{figure}
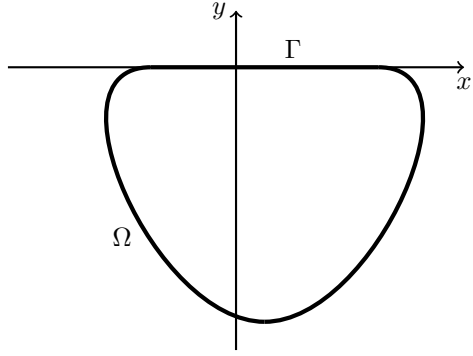

\begin{theorem}
\label{sez3:thm2}
Let $\Omega\subset\set{(x,y)\in\R^2|y<0}$ be a smooth open bounded, convex domain, whose boundary has nonnegative curvature and such that the zero curvature set is an interval $\Gamma$ on the $x$-axis. If $u$ is a semi-stable solution of~\eqref{pb1} then $u$ has an unique critical point $x_0$. Moreover $x_0$ is the maximum of $u$ and it has negative definite Hessian.
\end{theorem}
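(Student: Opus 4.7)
The plan is to follow Case~2 of the proof of Theorem~\ref{sez3:thm1}, with the segment $\Gamma$ in place of the single bad point $\orig$. Because $\Omega\subset\{y<0\}$ and the curvature of $\partial\Omega$ vanishes only on the horizontal segment $\Gamma\subset\{y=0\}$, Hopf's boundary lemma gives $|\nabla u|>0$ throughout a uniform strip $\overline\Omega\cap\{-\eps\le y\le 0\}$ for $\eps>0$ small. Setting $\Omega_\eps:=\Omega\cap\{y<-\eps\}$ thus produces a convex domain containing every critical point of $u$, star-shaped with respect to any interior point $p=(x_p,y_p)$. The goal is to prove $\deg(\Omega_\eps,T,\orig)=1$ through the homotopy $H_\eps(t,q)=tT(q)+(1-t)(q-p)$ used in~\eqref{sez3:thm1:eq3}--\eqref{sez3:thm1:eq4}, and then invoke Corollary~\ref{sez2:cor1}.

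The boundary $\partial\Omega_\eps$ splits into the arc $\partial\Omega\cap\{y\le-\eps\}$, on which $\curv>0$ strictly (since $\Gamma$ is the entire zero-curvature set), and the horizontal interior segment $S_\eps:=\Omega\cap\{y=-\eps\}$. On the arc, the admissibility of $H_\eps$ is the verbatim argument of Lemma~\ref{sez2:lemma1}. On $S_\eps$ a new computation is required: since $u\equiv 0$ on $\Gamma$, the Dirichlet data force
\[
u=u_x=u_{xx}=u_{xxx}=0,\qquad u_{yy}=-f(0),\qquad u_y<0
\]
all along $\Gamma$, and Taylor expanding around $(x,0)\in\Gamma$ yields for $q=(x,-\eps)\in S_\eps$
\[
T_1(q)=-u_{xy}(x,0)\,u_y(x,0)+O(\eps),\qquad T_2(q)=O(\eps).
\]
Thus, modulo $O(\eps)$, $T$ is a nonvanishing horizontal vector on $S_\eps$ as long as $u_{xy}(x,0)\ne 0$ uniformly on $\overline\Gamma$. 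The parallelism condition $T_1(q)(-\eps-y_p)=T_2(q)(x-x_p)$ required by $H_\eps(\tau,q)=\orig$ becomes an identity of the form ``(nonzero bounded quantity)$\cdot(|y_p|-\eps)=O(\eps)$'', impossible for $\eps$ small. Admissibility of $H_\eps$ on $\partial\Omega_\eps$ then gives $\deg(\Omega_\eps,T,\orig)=1$, and Corollary~\ref{sez2:cor1} delivers the unique nondegenerate critical point.

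The main obstacle is proving the key fact $u_{xy}(x,0)\ne 0$ at every point of $\overline\Gamma$. Assume first $u_{xy}(q_0)=0$ at an interior point $q_0=(x_0,0)\in\Gamma$. The function $v:=u_x$ solves the linearized equation $-\Delta v=f'(u)v$ in $\Omega$ and vanishes on $\Gamma$; moreover, since $u_{xx}\equiv u_{xxx}\equiv 0$ on $\Gamma$, differentiating~\eqref{pb1} in $x$ at $q_0$ yields also $u_{xyy}(q_0)=0$. The local expansion of $v$ thus begins $v(x,y)\sim(x-x_0)\,y\,u_{xxy}(q_0)$, so (provided $u_{xxy}(q_0)\ne 0$) an interior branch of $N_0=\{v=0\}$ enters $\Omega$ transversally to $\Gamma$ at $q_0$. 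The stability of $u$, via the strict eigenvalue-monotonicity argument that underlies Proposition~\ref{sez2:prop1}, forbids any nonempty subdomain of $\Omega$ whose boundary is entirely contained in $N_0$. Following the new branch until it returns to $\partial\Omega$ and distinguishing whether it lands on $\Gamma$ (which immediately produces a forbidden enclosed subdomain bounded by pieces of $\Gamma$ and the branch) or on the strictly convex part of $\partial\Omega$ at a point with vertical outer normal, one should reach a contradiction in each case; the degenerate situation $u_{xxy}(q_0)=0$ is ruled out by pushing the Taylor expansion further and exploiting the equation.

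The endpoints of $\overline\Gamma$ need a separate and more delicate analysis in the spirit of Lemma~\ref{sez3:lemma1}: because $\Gamma$ is a straight segment glued to the curved part of $\partial\Omega$, the boundary is $C^\infty$-flat at the endpoints, so every derivative of the boundary curvature vanishes there and the first-order contradiction used in Lemma~\ref{sez3:lemma1} has to be replaced by a higher-order sign analysis of $u_{xx}$ along a local parametrization of $N_0$. I expect this endpoint analysis, rather than the computation of $T$ on $S_\eps$, to be the technically hardest ingredient of the proof.
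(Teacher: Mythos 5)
Your proof hinges on the ``key fact'' that $u_{xy}\ne0$ at every point of $\overline\Gamma$, and this fact is false; this is exactly where the real difficulty of the theorem sits. Since $u\equiv0$ on the horizontal segment $\Gamma$, all of $\Gamma$ lies in $N_{0}=\{u_x=0\}$, and $N_{0}$ meets $\partial\Omega$ only in $\overline\Gamma$ and in the unique point $p$ of the strictly curved part of $\partial\Omega$ where the tangent is horizontal. The branch of $N_{0}$ entering $\Omega$ at $p$ cannot terminate in the interior nor close a loop (a loop would bound a domain $H$ with $\partial H\subset N_{0}$, impossible for a semi-stable solution, as in Proposition~\ref{sez2:prop1}), so it must land on $\overline\Gamma$ at some point $\xi$; at such a crossing $\nabla u_{x}(\xi)=\orig$, and since $u_{xx}\equiv0$ along $\Gamma$ this means precisely $u_{xy}(\xi)=0$. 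This is not a pathology you can exclude: for a domain symmetric in $x$ with $\Gamma$ symmetric and, say, $f\equiv1$, the solution is even in $x$, so $u_{xy}$ is odd in $x$ and vanishes at the midpoint of $\Gamma$. Your attempted contradiction argument fails in exactly the branch you left vague: following the interior nodal curve from $q_{0}$ until it meets the strictly convex part of $\partial\Omega$ at the point with vertical outer normal is not a contradiction --- it is the actual configuration. Consequently the computation on $S_\eps=\Omega\cap\{y=-\eps\}$ collapses near the vertical line through $\xi$: there the leading term $-u_{xy}(x,0)u_{y}(x,0)$ of $T_{1}$ degenerates, $T$ need not be a uniformly nonvanishing horizontal field, and the estimate ``(nonzero bounded quantity)$\cdot(\abs{y_{p}}-\eps)=O(\eps)$'' is unavailable, so admissibility of $H_\eps$ on $\partial\Omega_\eps$ is unproved.

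The paper's proof accepts the existence of $\xi$ and makes it the centre of the argument: the no-enclosed-domain property gives that $\xi$ is unique, hence $u_{xy}\ne0$ on $\Gamma\setminus\{\xi\}$ and $u_{xy}$ changes sign across $\xi$ along $\Gamma$; this yields $u_{xxy}(\xi)\ge0$, equality being excluded by the argument of Lemma~\ref{sez3:lemma1}, so that the level-line curvature satisfies $\curv_{y}(\xi)<0$. One then removes only a small ball $\overline B_\eps(\xi)$, not a strip: on $\Gamma\setminus B_\eps(\xi)$ admissibility is the easy case you did handle ($\curv=0$ forces $\tau=1$ in~\eqref{b3}, and then $u_{xy}(q)u_{y}(q)\ne0$ contradicts the first equation of~\eqref{sez3:thm1:eq5}), while on $\partial B_\eps(\xi)\cap\Omega$ a failure of admissibility would force $\curv(q_\eps)\le0$ and, letting $\eps\to0$, $\curv_{y}(\xi)\ge0$, contradicting the previous step, exactly as in Case~2 of Theorem~\ref{sez3:thm1}; Corollary~\ref{sez2:cor1} then gives the unique nondegenerate maximum. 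Incidentally, the endpoints of $\Gamma$, which you single out as the hardest point, require no higher-order curvature analysis: there either $u_{xy}\ne0$ and they are treated like the rest of $\Gamma\setminus\{\xi\}$, or they coincide with $\xi$ and fall under the ball-excision argument. To make your strip approach work you would in any case have to supply precisely this missing analysis near $\xi$.
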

\begin{proof}
We want to argue as in Proposition~\ref{sez2:prop1} to show that for every $\theta\in(0,\pi)\cap(\pi,2\pi)$, the nodal set $N_{\theta}$ is a smooth curve in $\overline\Omega$ homeomorphic to the closed interval $[0,1]$ (without self-intersection) which intersects $\partial\Omega$ at the two end points of $N_{\theta}$ and $M_\theta=\emptyset$.\\
First we recall that there exists an unique point $p\in\partial\Omega\setminus\Gamma$ such that $u_x(p)=0$ and in a neighborhood of $p$, $N_{0}$ is a smooth curve that intesects $\partial\Omega$ transversally  at $p$.

Next we claim that there exists at least a point $\xi\in\Gamma$ such that around that point $N_{0}$ consists exactly of $2$ branches of curves: the first is $\Gamma$ while the second intesects $\partial\Omega$ at $\xi$. Indeed if there exists $\eps>0$ such that $\Omega\cap N_{0}\cap(\R\times(-\eps,0])=\emptyset$ then  the nodal curve $N_{0}$ starting from $p$ has to enclose a nonempty domain $H\subset\Omega$ with $\partial H\subset N_{0}$, but this yields to a contradiction by Proposition~\ref{sez2:prop1}. Analogously we get that we cannot have more than one branch of $N_{0}$ exiting by $\xi$ otherwise we create again such a domain $H\subset\Om$.

Finally we have the uniqueness of  $\xi\in\Gamma$ such that $N_{0}$ consists of one curve starting from $\xi$ and disjoint from $\partial\Omega$. Indeed if there exists another point $\eta\in\Gamma$ with the same property we can argue as before to get the existence of $H\subset\Om$ which yields a contradiction.

So we have proved  that $N_{0}$ is the union of two smooth curves in $\overline\Omega$: one is   the subset of the boundary $\Gamma$  and the other is homeomorphic to the closed interval $[0,1]$ (without self-intersection) and intersects $\partial\Omega$ at the two end points. They intersect each other only in the point $\xi$. Moreover we have $M_\theta=\emptyset$ and in any critical point $q\in\Omega$ of $u$ the Hessian has rank $2$ (same argument of Proposition~\ref{sez2:prop1}).

Up to a translation we can assume $\xi=\orig$.
We point out that for all $q\in\Gamma$, by the Hopf boundary lemma and $\curv=0$, it holds
\[
u_{x}=0,\quad u_{xx}=0,\quad u_{y}<0.
\]
Moreover, if $q\not=\orig$ one has $u_{xy}\not=0$ otherwise, locally, $u_x$ is an harmonic polynomial of degree at least $2$ and this implies that there exists a branch of $N_{0}$ entering in $\Om$, a contradiction with the uniqueness of the point $\xi=\orig$ with this property.

As in the proof of Case $2$ of Theorem~\ref{sez3:thm1}, let $\eps>0$ such that~\eqref{sez3:thm1:eq1} and~\eqref{sez3:thm1:eq2} are verified. Furthermore, if the homotopy defined in~\eqref{sez3:thm1:eq3}~-~\eqref{sez3:thm1:eq4} is not admissible, then~\eqref{sez3:thm1:eq5} and~\eqref{b3} still hold for some $\tau_{\eps}\in[0,1]$ and $q_{\eps}=(x_{\eps},y_{\eps})\in\partial \Omega_{\eps}$.

Let us prove that $u_{xxy}(\orig)>0$ (this implies that $\curv_{y}(\orig)=-\frac{u^2_{y}(\orig)u_{xxy}(\orig)}{\abs*{\nabla u}^{3}}<0$). Indeed, since $u_{xy}\ne0$ on $\Gamma\setminus\{\orig\}$ we have $u_{xy}>0$ on $\set{(x,y)\in\Gamma|x<0}$  and $u_{xy}<0$ on $\set{(x,y)\in\Gamma|x>0}$.  It follows $u_{xxy}(\orig)\ge0$, but if equality holds we can argue as in the proof of Lemma~\ref{sez3:lemma1} to get a contradiction. 

Next we can repeat the same argument as in Case $2$ of the proof of Theorem~\ref{sez3:thm1}
getting that the homotopy is admissible and $\deg(\Omega_{\eps},T,\orig)=1$. Finally we apply  Corollary~\ref{sez2:cor1} to conclude as in the proof of Theorem~\ref{sez3:thm1}.
\end{proof}

\begin{remark}
We observe that if $\partial\Omega$ contains more then one component homeomorphic to an interval, then they are in a finite number: $I_1,\dots,I_m$. Moreover, since the domain is convex, they are parallel at most at pairs.
Then also in this case it is possible to prove that for every $\theta\in[0,2\pi)$, the nodal set $N_{\theta}$ is a smooth curve in $\overline\Omega$ homeomorphic to the closed interval $[0,1]$ (without self-intersection) which intersects $\partial\Omega$ at the two end points of $N_{\theta}$ and in any critical point $q\in\Omega$ of $u$ the Hessian has rank $2$.
\end{remark}

Finally the proof of Theorem~\ref{mainthm} easy follows.

\begin{proof}[Proof of Theorem~\ref{mainthm}]
Let $\mathcal{K}:=\set{p\in\partial\Omega|\curv(p)=0,\,\,u_{\nu\tangvet}(p)=0}$ where $\nu$ is the normal exterior unit vector and $\tangvet$ the tangent one. Then define
\[
\Omega_{\eps}:=\Omega\setminus\bigcup_{q\in\mathcal{K}}\overline{B_{\eps}(q)},
\]
with $\eps>0$ such that
\begin{gather*}
B_{2\eps}(q_{1})\cap B_{2\eps}(q_{2})=\emptyset,\quad\text{for all } q_{1},q_{2}\in\mathcal{K},\\
\Omega_{\eps}\text{ is star-shaped with respect to some }p,\\
\abs{\nabla u}\not=0,\quad\text{in }\bigcup_{q\in\mathcal{K}}\overline{B_{\eps}(q)}\cap\overline\Omega.
\end{gather*}
The proof follows arguing as in the preceding theorems.
\end{proof}

\bibliography{DGMarxiv.bib}
\bibliographystyle{amsplain}
\end{document}